\documentclass{amsart}
\usepackage{amsmath,amssymb,amsthm,amsfonts,amscd,mathrsfs,mathtools,enumerate}
\usepackage{tikz,tikz-cd,caption}
\usepackage[all]{xy}
\usepackage[hyphens]{url}
\usepackage{hyperref}
\usepackage{caption}
\usetikzlibrary{quotes,arrows}

\theoremstyle{plain}
    \newtheorem{thm}{Theorem}[section]
    \newtheorem{lem}[thm]   {Lemma}
    
    \newtheorem{prop}[thm]  {Proposition}
    
\theoremstyle{definition}
    \newtheorem{defn}[thm]  {Definition}

    \newtheorem{rem}[thm]{Remark}

\def\A{{\mathcal A}}

\def\cat{\mathsf{cat}}
\def\secat{\mathsf{secat}}
\def\wgt{\mathrm{wgt}}

\def\dim{\mathrm{dim}}
\def\ker{\operatorname{ker}}

\def\coker{\mathrm{coker}}

%%%%%%% Oprea macros %%%%%%%%%
%\def\cat1{\mathrm{cat}_1}
%\def\cat_m{\mathrm{cat}_m}

%%%%%%%%%%%%%%%%%%%%%%

\newcommand{\be}{\begin{enumerate}}
\newcommand{\ee}{\end{enumerate}}

\newcommand{\Z}{\mathbb{Z}}

\newcommand{\C}{\mathbb{C}}
\newcommand{\Q}{\mathbb{Q}}
\newcommand{\Ha}{\mathbb{H}}
\newcommand{\F}{\mathbb{F}}

\newcommand{\TC}{{\sf TC}}

\def\Mwgt{\mathrm{Mwgt}}
\def\Swgt{\mathrm{Swgt}}

\begin{document}

\title[Secondary operations]{Secondary cohomology operations and sectional category}

\author{Mark Grant}

\address{Institute of Mathematics,
Fraser Noble Building,
University of Aberdeen,
Aberdeen AB24 3UE,
UK}

\email{mark.grant@abdn.ac.uk}

\date{\today}

\keywords{sectional category, Lusternik--Schnirelmann category, secondary cohomology operations}
\subjclass[2020]{55M30, 55S20 (Primary); 55S40 (Secondary).}

\begin{abstract} 
We show how secondary cohomology operations in the total space of the fibred join can be used to give lower bounds for the sectional category of a fibration. This suggests a refinement of the module weight of Iwase--Kono, which we call the {\em secondary module weight}. Examples are given for which the secondary module weight at the prime $2$ detects sectional category while the module weight does not.
\end{abstract}

\maketitle
\section{Introduction}\label{sec:intro}

The {\em sectional category} of a fibration $q:E\to B$, denoted $\secat(q)$, is defined to be the minimal integer $k$ such that $B$ admits a cover by open sets $U_0,\ldots , U_k$, on each of which $q$ admits a continuous local section $s_i:U_i\to E$ \cite{Schwarz,James}. Prominent special cases include the Lusternik--Schnirelmann category $\cat(X)$ \cite{CLOT}, which is the sectional category of the based path fibration $P_0 X\to X$, and Farber's topological complexity $\TC(X)$ \cite{Far03,Far04}, which is the sectional category of the free path fibration $PX\to X\times X$, as well as its many variants \cite{FG07,Rudyak,CFW}. Note that here we use the normalized version, for which $\secat(q)=0$ if and only if $q$ admits a (global) section. 

An important foundational result due to Schwarz \cite{Schwarz} (see also \cite{James}) states that if $B$ is paracompact then $\secat(q)$ agrees with the minimal $k$ such that the $(k+1)$-fold fibred join $q(k):E(k)\to B$ admits a section. If $F$ is the fibre of $q$, then the fibre of $q(k)$ is $F(k)=F\ast \cdots \ast F$, the iterated join of $(k+1)$ copies of $F$. When $F$ is $(r-1)$-connected, $F(k)$ is $(r+1)(k+1)-2$-connected. Then (assuming $B$ is a CW complex) cellular obstruction theory gives an upper bound
\[
\secat(q)< \frac{\dim(B)+1}{r+1}.
\]

A standard cohomological argument using the open sets definition gives that $\secat(q)$ is bounded below by $\operatorname{nil}\ker (q^*:H^*(B;R)\to H^*(E;R))$, the nilpotency of the kernel of the induced map in cohomology. Here we take coefficients in an arbitrary commutative ring $R$ (local coefficients may also be used). This bound can sometimes be improved upon using the fibred join interpretation. In particular, we define following Farber--Grant \cite{FG07,FG08} the \emph{sectional category weight} of $q$ to be 
\[
\wgt(q;R)=\min \{k \mid q(k)^*:H^*(B;R)\to H^*(E(k);R)\mbox{ is injective}\}.
\]
Then (since a section $s:B\to E(k)$ of $q(k)$ clearly implies injectivity of $q(k)^*$) one has $\secat(q)\geq\wgt(q;R)$. It can be shown that $\wgt(q;R)\geq \operatorname{nil}\ker (q^*:H^*(B;R)\to H^*(E;R))$, and examples of strict inequality are known.

A further improvement to this lower bound was found by Iwase and Kono \cite{IwaseKono}. They observed that a section $s$ of $q(k)$, in addition to implying injectivity of the induced map $q(k)^*$, would entail a retraction $s^*: H^*(E(k);R)\to H^*(B;R)$ which commutes with all cohomology operations. They therefore defined the \emph{module weight} of $q$ at the prime $p\ge2$ to be
\begin{multline*}
\Mwgt(q;\F_p)=\min \{k \mid q(k)^*:H^*(B;\F_p)\to H^*(E(k);\F_p)\mbox{ admits}\\
\mbox{a retraction in the category of $\mathcal{A}_p$-modules}\}.
\end{multline*}  
Here $\mathcal{A}_p$ denotes the mod $p$ Steenrod algebra. Clearly, one has inequalities
\[
\secat(q)\ge \Mwgt(q;\F_p) \ge \wgt(q;\F_p)\ge \operatorname{nil}\ker(q^*: H^*(B;\F_p)\to H^*(E;\F_p))
\]
for each prime $p$. Examples of strict inequality $\Mwgt(q;\F_p) > \wgt(q;\F_p)$ can be found in \cite{IwaseKono, Choi09, Choi15}.

The ideas in the previous two paragraphs were originally developed in the study of Lusternik--Schnirelmann category, where $q$ is the based path fibration $P_0 X\to X$. In this context $\wgt(q;R)$ coincides with the \emph{strict category weight} of $X$, as developed by Rudyak \cite{Rudyak99} and Strom \cite{Strom} based on ideas of Fadell and Husseini \cite{FH}. If in addition $R$ is a field, then $\wgt(q;R)$ coincides with the Toomer invariant \cite{Toomer}. With $R=\Q$ the rationals, the invariant $\wgt(q;\Q)$ for arbitrary fibrations $q: E\to B$ has been introduced in \cite{C-VKV} where it is denoted $\operatorname{Hsecat}(q)$. 

In this paper we will improve on the module weight using secondary cohomology operations. Recall that whenever stable cohomology operations $\theta$ and $\varphi$ satisfy a relation $\varphi\circ\theta\equiv 0$, this gives rise to a \emph{secondary cohomology operation}
\[
\Phi=\Phi(\varphi,\theta) : \ker \,\theta \to \coker \,\varphi
\]
with \emph{indeterminacy} $\operatorname{im} \varphi$. Such secondary operations are natural, in the following sense: if $f: X\to Y$ is a map of spaces, then the diagram
 \[
 \xymatrix{
 \ker\, \theta_Y \ar[r]^-{\Phi} \ar[d]_{f^*} & \coker\, \varphi_Y \ar[d]^{f^*} \\
 \ker\, \theta_X \ar[r]^-{\Phi} & \coker\, \varphi_X
 }
 \]
 commutes. (This makes sense because $f^* (\ker\,\theta_Y)\subseteq \ker\,\theta_X$ and $f^*(\operatorname{im} \varphi_Y)\subseteq \operatorname{im} \varphi_X$.) If we consider only stable cohomology operations with $\F_p$-coefficients, we can say that $f^*:H^*(Y;\F_p)\to H^*(X;\F_p)$ is a map of $\A_p$-modules which commutes with all secondary operations.  This leads us to our main definition.

\begin{defn}\label{def:Swgt}
The \emph{secondary module weight} of the fibration $q:E\to B$ at the prime $p$ is 
 \begin{multline*}
 \Swgt(q;\F_p) := \min \{k \mid q(k)^*:H^*(B;\F_p)\to H^*(E(k);\F_p)\mbox{ admits a retraction } \\ 
 \mbox{in the category of $\mathcal{A}_p$-modules that commutes with all secondary operations}\}.
\end{multline*}
\end{defn}

We illustrate the utility of this definition by giving examples where $\secat(q)=\Swgt(q;\F_p)>\Mwgt(q;\F_p)$. Our first example is the so-called \emph{twistor bundle} 
\[
\xymatrix{
S^2 \ar[r] & \C P^5 \ar[r]^q & \Ha P^2
}
\]
under which $q$ sends a complex line in $\C^6\cong\Ha^3$ to the quaternionic line in $\Ha^3$ containing it. %Viewing the projective spaces as orbit manifolds $\C P^5=S^{11}/S^1$ and $\Ha P^2= S^{11}/S^3$, one sees that the fibre is the homogenous space $S^3/S^1=S^2$. 
Standard techniques give $1\le \secat(q)\le 2$, but are inconclusive in determining the exact value of $\secat(q)$. In fact, it was our attempts to determine whether $q(1)$ admits a section using obstruction theory that led us to Definition \ref{def:Swgt}.

\begin{thm}\label{thm:twistor}
The twistor bundle $q:\C P^5\to \Ha P^2$ has 
\[
2= \secat(q)=\Swgt(q;\F_2)>\Mwgt(q;\F_2)=1>\wgt(q;\F_2)=0.
\]
\end{thm}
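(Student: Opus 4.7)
The plan is to verify each of the four equalities in turn, with the main technical work concentrated in the last.

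For $\wgt(q;\F_2)=0$: the twistor bundle is the complex projectivization $\C P^5=P_\C(\xi)$ of the tautological quaternionic line bundle $\xi\to\Ha P^2$, and the projective bundle formula gives $q^*(v)=c_2(\xi)\equiv u^2\pmod{2}$, so $q^*$ is already injective. For $\Mwgt(q;\F_2)\geq 1$, any $\A_2$-linear retraction $\rho$ of $q^*$ satisfies $\rho(u)\in H^2(\Ha P^2;\F_2)=0$, giving $v=\rho(u^2)=\rho(Sq^2 u)=Sq^2\rho(u)=0$, a contradiction. The upper bound $\secat(q)\leq 2$ follows from the dimension count: since $F=S^2$ is $1$-connected (so $r=2$), the bound from the introduction yields $\secat(q)<9/3=3$.

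For $\Mwgt(q;\F_2)\leq 1$, I compute $H^*(E(1);\F_2)$ via the Mayer--Vietoris decomposition $E(1)=U\cup V$ with $U,V\simeq\C P^5$ and $U\cap V\simeq\C P^5\times_{\Ha P^2}\C P^5$. Writing $a=\pi_1^*u,\,b=\pi_2^*u$, the relations $a^2=b^2=v$ determine the fibered product's cohomology, and Mayer--Vietoris yields the additive basis $\{1,v,\sigma,v^2,v\sigma,v^2\sigma\}$ in degrees $0,4,5,8,9,13$, with $\sigma=\delta(ab)$. A Cartan-formula calculation combined with naturality $\delta\circ Sq^i=Sq^i\circ\delta$ shows $Sq^i\sigma=0$ for all $i>0$; together with $Sq^4 v=v^2$ and (by Cartan) $Sq^4(v\sigma)=v^2\sigma$, this determines the full $\A_2$-module structure. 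The unique degree-preserving retraction $\rho$---the identity on $\{1,v,v^2\}$ and zero on $\{\sigma,v\sigma,v^2\sigma\}$---is then $\A_2$-linear.

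For $\Swgt(q;\F_2)\geq 2$, I apply the secondary operation $\Phi$ associated with the Adem relation $Sq^2 Sq^2+Sq^3 Sq^1\equiv 0$ in $\A_2$. Since $Sq^1\sigma=Sq^2\sigma=0$ and the indeterminacy $Sq^2 H^6(E(1);\F_2)+Sq^3 H^5(E(1);\F_2)$ vanishes, $\Phi(\sigma)$ is a well-defined element of $H^8(E(1);\F_2)=\F_2\cdot v^2$. The crucial claim is $\Phi(\sigma)=v^2$. Granted this, any retraction $\rho$ would force $\rho\Phi(\sigma)=v^2\neq 0$ whereas $\Phi(\rho\sigma)=\Phi(0)=0$, contradicting compatibility with $\Phi$; combined with $\secat(q)\leq 2$ this yields $\secat(q)=\Swgt(q;\F_2)=2$.

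The main obstacle is the identification $\Phi(\sigma)=v^2$. Naturality of $\Phi$ with the Mayer--Vietoris connecting map $\delta$ is unavailable: $\Phi(ab)$ is not even defined on $\C P^5\times_{\Ha P^2}\C P^5$ because $Sq^2(ab)=v(a+b)\neq 0$ there, and the naive Peterson--Stein formula via cohomology lifts gives $0$ mod $2$ by cancellation. The approach is instead to identify $\Phi$ with the secondary obstruction detecting the Postnikov $k$-invariant of $S^5$: in the universal example $P_6 S^5$ (the homotopy fibre of $Sq^2\colon K(\Z,5)\to K(\F_2,7)$) the witness for $Sq^2\iota_5=0$ is the fibre generator $\iota_6\in H^6(P_6S^5;\F_2)$, and $\Phi(\iota_5)=Sq^2\iota_6$ coincides with the next $k$-invariant $k_7\in H^8(P_6S^5;\F_2)$. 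Pulling back along the classifying map $\Ha P^2\to P_6 S^5$ of the oriented $S^5$-bundle $E(1)\to\Ha P^2$ and showing that the tertiary section-obstruction in $H^8(\Ha P^2;\pi_7 S^5)=\F_2$ equals $v^2$ then yields $\Phi(\sigma)=v^2$.
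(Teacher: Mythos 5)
Your computations of $\wgt(q;\F_2)=0$, of $\Mwgt(q;\F_2)=1$, and of the upper bound $\secat(q)\le 2$ are correct and essentially agree with the paper's (the paper uses the collapsing Serre spectral sequence of $q(1)$ where you use Mayer--Vietoris, but the resulting $\A_2$-module structure on $H^*(E(1);\F_2)$ and the retraction are the same). The gap is in the step you yourself flag as the main obstacle: the proof that $\Phi(\sigma)=v^2$. As written this step is a restatement of the difficulty, not a resolution of it. First, there is no ``classifying map $\Ha P^2\to P_6S^5$'' attached to the bundle: what exists is a lift of $\mathrm{id}_{\Ha P^2}$ through the sixth stage $E_6\to \Ha P^2$ of the Moore--Postnikov tower of $q(1)$, a twisted object whose \emph{fibre} is $P_6S^5$; the relevant $k$-invariant lives in $H^8(E_6;\pi_7S^5)$, and relating its pullback to the total-space class $\Phi(\sigma)\in H^8(E(1);\F_2)$ requires a comparison (of Peterson--Stein type) that you have not supplied. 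Second, and more seriously, you reduce everything to ``showing that the tertiary section-obstruction in $H^8(\Ha P^2;\pi_7S^5)=\F_2$ is nonzero'' without offering any method for computing it. That nonvanishing (for every choice of lift over the $7$-skeleton) is exactly equivalent to $\secat(q)=2$, so the reduction is circular: if you could evaluate the tertiary obstruction directly you would not need $\Phi$ at all. The whole point of the secondary-operation formalism here is to sidestep the non-naturality of higher obstructions (the paper's closing remark in this section notes that this obstruction class is not even contained in $\ker q(1)^*$).

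The paper's actual evaluation of $\Phi$ uses Harper's ``compatibility with exact sequences'' applied to the cofibration $E\xrightarrow{f}E(1)\xrightarrow{j}C\xrightarrow{\tau}\Sigma E$, where $C$ is identified with the cofibre of $\pi_1\colon E\times_B E\to E$, i.e.\ the Thom space of that $S^2$-bundle. Since $w_1(\pi_1)=w_2(\pi_1)=0$ (they are pulled back from $H^1(\Ha P^2)=H^2(\Ha P^2)=0$), the Thom class $U\in\tilde H^3(C)$ satisfies $Sq^1U=Sq^2U=0$, and the Cartan formula then shows $Sq^2\colon H^5(C)\to H^7(C)$ is an isomorphism. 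Combined with the isomorphisms $\tau^*\colon H^7(\Sigma E)\to H^7(C)$ and $Sq^2\colon H^7(\Sigma E)\to H^9(\Sigma E)$ (that is, $Sq^2(b^3)=b^4$ in $\C P^5$), Harper's Proposition 4.2.2 forces $\Phi(\sigma)\ne 0$. This cofibre-sequence computation, or an equivalent one, is the missing ingredient in your argument; until it is supplied, the inequality $\Swgt(q;\F_2)\ge 2$, and hence $\secat(q)=2$, is not established.
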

Theorem \ref{thm:twistor} answers a question on MathOverflow asked by Prateep Chakraborty \cite{Chakra}. Note that the rational sectional category $\secat_0(q)$ in this example is $1$, as follows from results of Stanley \cite{Stanley}.

Our second example concerns Lusternik--Schnirelmann category. As is standard, when $q:P_0 X\to X$ is the based path fibration on a space $X$ we denote $\wgt(q;\F_p)$, $\Mwgt(q;\F_p)$ and $\Swgt(q;\F_p)$ by $\wgt(X;\F_p)$, $\Mwgt(X;\F_p)$ and $\Swgt(X;\F_p)$, respectively.

\begin{thm}\label{thm:2cell}
Let $\eta\in \pi_7(S^6)$ and $\omega\in \pi_6(S^3)$ be generators, and let $\alpha\in\pi_7(S^3)$ be the homotopy class of the composition $\omega\circ\eta$. If $X=S^3\cup_\alpha e^8$ is the homotopy cofibre of $\alpha$, then
\[
2=\cat(X)=\Swgt(X;\F_2)>\Mwgt(X;\F_2)=\wgt(X;\F_2)=1.
\]
\end{thm}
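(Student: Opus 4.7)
The plan is to establish the four values in turn, with the principal effort reserved for the lower bound $\Swgt(X;\F_2)\ge 2$. Since $X$ is $2$-connected of dimension $8$, the standard dimension--connectivity estimate yields $\cat(X)\le\lfloor 8/3\rfloor = 2$. The reduced cohomology $\tilde H^*(X;\F_2) = \F_2 x_3\oplus \F_2 x_8$ has vanishing cup products (the target groups are zero) and trivial positive $\A_2$-action on its generators: every $Sq^i$ with $i>0$ applied to $x_3$ or $x_8$ lands in a zero cohomology group or vanishes unstably. Hence the cup length is $1$, so $\wgt(X;\F_2)\ge 1$. Moreover, because $\alpha = \omega\circ\eta\in\pi_7(S^3)$ has stable image in $\pi_4^s = 0$, the suspension spectrum $\Sigma^\infty X$ is equivalent to $\Sigma^\infty(S^3\vee S^8)$, and consequently every stable secondary cohomology operation vanishes on $\tilde H^*(X;\F_2)$.

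For the upper bound $\Mwgt(X;\F_2)\le 1$, I would construct an $\A_2$-module retraction $s^*\colon H^*(E(1);\F_2)\to H^*(X;\F_2)$ of $q(1)^*$ for the based path fibration, noting that $E(1)\simeq \Sigma\Omega X$ is the first Ganea space. Because $q(1)^*x_3$ and $q(1)^*x_8$ generate a trivial $\A_2$-submodule of $H^*(E(1);\F_2)$ (this follows from $Sq^i b_2 = 0$ for $i>0$ using $\Omega X\simeq \Omega S^3$ in low degrees and the divided-power structure of $H^*(\Omega S^3;\F_2)$), a splitting can be extracted by a direct argument on the Serre spectral sequence of $\Omega X * \Omega X \to E(1)\to X$. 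Together with $\wgt\le\Mwgt\le\cat$ and the lower bound above, this gives $\wgt(X;\F_2) = \Mwgt(X;\F_2) = 1$.

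For the central step $\Swgt(X;\F_2)\ge 2$, suppose for contradiction that $s^*$ is an $\A_2$-module retraction of $q(1)^*$ commuting with all secondary operations. I would consider the secondary operation $\Phi = \Phi(\varphi,\theta)$ associated with the Steenrod-algebra relation $Sq^3 Sq^3 + Sq^5 Sq^1 = 0$, taking $\theta = (Sq^3,Sq^1)$ and $\varphi = (Sq^3,Sq^5)$. The key cohomological identification is that $\Phi(q(1)^*x_3) = q(1)^*x_8$ in $H^*(E(1);\F_2)$, modulo an indeterminacy contained in $\im \varphi_{E(1)}$. Under the suspension isomorphism $H^*(E(1);\F_2) \cong H^{*-1}(\Omega X;\F_2)$, this reduces to showing that the corresponding desuspended operation sends $b_2\in H^2(\Omega X;\F_2)$ to $b_7\in H^7(\Omega X;\F_2)$, where $b_7$ corresponds to the $8$-cell of $X$ via loop desuspension; this is a Toda-bracket or functional-cohomology-operation computation reflecting the unstable composition $\omega\circ\eta$, and it is the main obstacle of the proof.

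Granting this identification, naturality of $\Phi$ and the retraction identity $s^*\circ q(1)^* = \mathrm{id}$ yield
\[
x_8 = s^*(q(1)^*x_8) = s^*(\Phi(q(1)^*x_3)) = \Phi(s^*(q(1)^*x_3)) = \Phi(x_3) = 0,
\]
where the indeterminacy correction is absorbed since $s^*(\im \varphi_{E(1)})\subseteq \im \varphi_X = 0$ for any $\A_2$-linear map and the indeterminacy in $X$ is trivial. This contradiction gives $\Swgt(X;\F_2)\ge 2$; combined with $\Swgt(X;\F_2)\le \cat(X)\le 2$, we obtain $\cat(X) = \Swgt(X;\F_2) = 2$. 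Apart from the Toda-bracket calculation, the remainder is a formal adaptation of the Iwase--Kono framework upgraded to secondary operations.
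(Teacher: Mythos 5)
Your central step fails, and the failure is visible from within your own write-up. You propose to use the secondary operation $\Phi'$ based on $Sq^3Sq^3+Sq^5Sq^1\equiv 0$ and to show $\Phi'(q(1)^*x_3)=q(1)^*x_8$ in $H^*(E(1);\F_2)$. But $q(1)^*x_3$ is pulled back from $X$, so naturality of $\Phi'$ along $q(1)$ gives $\Phi'(q(1)^*x_3)=q(1)^*\Phi'_X(x_3)$ up to indeterminacy; and the indeterminacy vanishes on both sides ($Sq^5$ kills degree-$3$ classes unstably, $H^5(X)=0$, and $Sq^3\colon H^5(E(1))\to H^8(E(1))$ is trivial — this is Lemma 4.2 of the paper, which you would need to reprove anyway). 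Hence the claim $\Phi'(q(1)^*x_3)=q(1)^*x_8\ne 0$ is \emph{equivalent} to $\Phi'_X(x_3)=x_8\ne 0$ in $X$ itself. You have already (correctly) observed that $\alpha$ is stably trivial, $\Sigma^\infty X\simeq\Sigma^\infty(S^3\vee S^8)$, and therefore every stable secondary operation vanishes on $\tilde H^*(X;\F_2)$; applied to $\Phi'$ (which is stable, being based on a relation among stable operations) this forces $\Phi'_X(x_3)=0$, hence $\Phi'(q(1)^*x_3)=0$. So the "Toda-bracket computation" you defer as the main obstacle is not merely unproven — it is false, and no operation evaluated on a class in the image of $q(1)^*$ with zero indeterminacy can ever do the job, by exactly this naturality argument.

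The fix, which is the route the paper takes, is to apply a secondary operation to a class that is \emph{not} in the image of $q(1)^*$: the generator $y$ of $H^5(G_1(X);\F_2)\cong\F_2$ (note $H^5(X)=0$). The paper shows that the operation $\Phi$ based on the Adem relation $Sq^3Sq^1+Sq^2Sq^2\equiv 0$ satisfies $\Phi(y)=q(1)^*x_8\ne 0$ with zero indeterminacy; this reflects the Berstein--Hilton Hopf invariant $H(\alpha)=\eta\circ\eta\in\pi_7(S^5)$ rather than the (stably trivial) class $\alpha$ itself. Establishing this requires real work: Adams--Hilton computations of $H^*(G_1(C_\eta))$, $H^*(G_1(C_\alpha))$ and of the cofibre $C$ of $G_1(\psi)$, Harper's compatibility of secondary operations with the cofibre sequence $G_1(C_\eta)\to G_1(C_\alpha)\to C$, and a Kono--Kozima input identifying a nonzero $Sq^2$ in $H^*(\Omega\mathrm{Spin}(5))$. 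The contradiction is then $x_8=s^*q(1)^*x_8=s^*\Phi(y)=\Phi(s^*y)=\Phi(0)=0$ since $s^*y\in H^5(X)=0$. Your remaining steps ($\cat(X)\le 2$, $\wgt\ge 1$, the $\A_2$-retraction giving $\Mwgt\le 1$) are essentially fine and match the paper, but without a correct nonvanishing secondary operation on $E(1)$ the theorem is not proved.
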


In the proofs of both Theorems \ref{thm:twistor} and \ref{thm:2cell}, the secondary operation $\Phi$ based on the Adem relation $Sq^3Sq^1+Sq^2Sq^2\equiv 0$ is shown to be nonzero with zero indeterminacy in the total space of the fibred join. In each case, this operation detects the Hopf invariant of the attaching map of the top cell \cite{BH,CLOT,GGV}, which turns out to be represented by the class of the composition $\eta\circ\eta\in \pi_7(S^5)$. The main technique to prove non-triviality of $\Phi$ is what Harper calls ``compatibility with exact sequences" in his book \cite{Harper}. This method sometimes allows to compute secondary operations in a space $Y$ in terms of primary operations in the other spaces in a cofiber sequence involving $Y$. We review the method in Section 2 below. Thereafter the proofs diverge: Theorem \ref{thm:twistor} uses the Thom isomorphism relating Steenrod squares in the total space of a sphere bundle with Stiefel-Whitney classes, while Theorem \ref{thm:2cell} uses results of Kono--Kozima \cite{KonoKozima} on Steenrod squares in the loop space of $\operatorname{Spin}(n)$. 

The author wishes to thank John Oprea and Lucile Vandembroucq for helpful comments on an earlier draft, and Prateep Chakraborty for asking the question \cite{Chakra} which initiated this work. 

\section{Secondary cohomology operations}

In this section we recall the neccessary material about secondary cohomology operations, following Harper's book \cite[Chapter 4]{Harper}. Our main interest will be in cohomology operations with $\F_2$-coefficients; however, since we will need to consider inhomogeneous primary operations we will state things in greater generality than we perhaps need. 

We work in the category of compactly generated spaces with non-degenerate base points. Consider a composition of maps
\begin{equation}\label{eq:compos}
\xymatrix{
K_0 \ar[r]^-{\theta} & K_1 \ar[r]^-{\varphi} & K_2
}
\end{equation}
such that $\varphi\circ\theta\sim \ast$ and $K_2$ is a simply-connected $H$-space. Given a space $X$ we write
\[
\ker \theta_X:=\{ [\varepsilon] \mid \varepsilon: X\to K_0,\, \theta\circ\varepsilon \sim \ast\} \subseteq [X, K_0],
\]
and 
\[
\coker\, \Omega\varphi_X:=[X,\Omega K_2]/\operatorname{im} \Omega\varphi_*,
\]
where $\Omega\varphi_* : [X,\Omega K_1] \to [X,\Omega K_2]$ is given by $\Omega\varphi_*(g)=\Omega\varphi\circ g$. (Harper writes $S_\theta(X)$ for $\ker \theta_X$ and $T_{\Omega\varphi}(X)$ for $\coker\,\Omega\varphi_X$.) These constructions are functorial: if $f:X\to Y$ is a map of spaces, there are induced maps
\[
f^*: \ker\theta_Y \to \ker\theta_X,\qquad f^*[\varepsilon]=[\varepsilon\circ f],
\]
\[
f^*: \coker\,\Omega\varphi_Y\to \coker\,\Omega\varphi_X,\qquad f^*[[g]]=[[g\circ f]].
\]

Given a composition as in (\ref{eq:compos}) together with a null-homotopy $H: \varphi\circ\theta \sim \ast$, there is defined a \emph{secondary cohomology operation} based on the triple $(\varphi,\theta,H)$, which is a natural transformation
\[
\Phi=\Phi(\varphi,\theta,H): \ker \theta_{(-)}\to \coker\,\Omega\varphi_{(-)}.
\]
 
In most applications the spaces $K_0$, $K_1$ and $K_2$ are products of Eilenberg--MacLane spaces, and $\varphi$ and $\theta$ represent combinations of Steenrod operations. For example, for each $i\ge0$ there are maps $\theta: K(\F_2,i)\to K(\F_2,i+1)\times K(\F_2,i+2)$ and $\varphi: K(\F_2,i+1)\times K(\F_2,i+2)\to K(\F_2,i+4)$ representing the inhomogenous primary operations 
\[
\begin{pmatrix} Sq^1 & Sq^2 \end{pmatrix}: H^i(X)\to H^{i+1}(X)\times H^{i+2}(X),\quad x\mapsto \big(Sq^1(x), Sq^2(x)\big),
\]
\[
\begin{pmatrix} Sq^3 \\ Sq^2 \end{pmatrix}: H^{i+1}(X)\times H^{i+2}(X) \to H^{i+4}(X), \quad (x,y) \mapsto Sq^3(x) + Sq^2(y).
\]
(Here $\F_2$-coefficients are understood.) That $\varphi\circ\theta\sim \ast$ is null-homotopic is witnessed by the Adem relation $Sq^3Sq^1 + Sq^2Sq^2\equiv 0$. There results a homogeneous secondary operation $\Phi: H^i(X)\dashrightarrow H^{i+3}(X)$, defined on the kernel of $\begin{pmatrix} Sq^1 & Sq^2 \end{pmatrix}$ and taking values in the cokernel of $\begin{pmatrix} Sq^3 \\ Sq^2 \end{pmatrix}$. In this example $\Phi$ is independent of the choice of null-homotopy $H:\varphi\circ\theta\sim\ast$, although not in general \cite[p.\ 89]{Harper}.

In the above example we in fact have sequences of composable maps $\theta=\{\theta_i\}_{i\ge0}$ and $\varphi=\{\varphi_i\}_{i\ge0}$ which represent stable cohomology operations in the sense that $\Omega \theta_{i+1}=\theta_i$ and $\Omega\varphi_{i+1}=\varphi_i$ for all $i\ge0$. We therefore have that $\ker\theta_X\subseteq H^*(X;\F_p)$ and $\operatorname{im} \Omega\varphi_X =\operatorname{im} \varphi_X\subseteq H^*(X;\F_p)$ are graded vector subspaces. We will say that a graded linear map $F: H^*(Y;\F_p)\to H^*(X;\F_p)$ of degree $0$ \emph{commutes with all secondary operations} if for every composable pair of stable operations $\theta=\{\theta_i\}_{i\ge0}$ and $\varphi=\{\varphi_i\}_{i\ge0}$ as above with $\varphi\circ\theta\sim \ast$, one has $F (\ker\theta_Y)\subseteq \ker\theta_X$ and $F(\operatorname{im}\varphi_Y) \subseteq \operatorname{im}\varphi_X$, and the diagram
   \[
 \xymatrix{
 \ker\, \theta_Y \ar[r]^-{\Phi} \ar[d]_{F} & \coker\, \varphi_Y \ar[d]^{F} \\
 \ker\, \theta_X \ar[r]^-{\Phi} & \coker\, \varphi_X
 }
 \] 
commutes. In particular, since $\Phi$ is a natural transformation any $F=f^*$ induced by a map of spaces $f:X\to Y$ commutes with all secondary operations.

There are many tools for computing secondary operations. We will only make use of what Harper calls ``compatibility with exact sequences" \cite[Section 4.2.3]{Harper}. Let $f: X\to Y$ be a map and consider the cofibration sequence
\begin{equation}\label{eq:cofib}
\xymatrix{
 X \ar[r]^-{f} & Y \ar[r]^-{j} & C_f \ar[r]^-{\tau} & \Sigma X \ar[r]^-{\Sigma f} & \Sigma Y
 }
 \end{equation}
 where $C_f$ is the homotopy cofibre of $f$, and $j$ and $\tau$ include and collapse $Y$, respectively. Let $\Phi: \ker\theta_{(-)}\to \coker\,\Omega\varphi_{(-)}$ be the secondary operation based on a null-homotopic composition
 \[
 \xymatrix{
 K_0 \ar[r]^-{\theta} & K_1 \ar[r]^-{\varphi} & K_2,
 }
 \]
 where we assume in addition that $K_1$ is an $H$-space and $\varphi$ is an $H$-map. The result below gives a means of computing $\Phi: \ker\theta_Y\to \coker\,\Omega\varphi_Y$ in terms of the primary operations represented by $\theta$ and $\varphi$ in the other spaces in (\ref{eq:cofib}), and is independent of the choice of null-homotopy of $\varphi\circ\theta$.
 
We have a commutative diagram in which the middle column is exact:
\[
\xymatrix{
  & [\Sigma Y, K_1] \ar@{=}[r] \ar[d]_{\Sigma f^*} & [\Sigma Y, K_1] \ar[d]_B \\
   & [\Sigma X,K_1] \ar[r]^{\varphi_*} \ar[d]_{\tau^*} & [\Sigma X, K_2]\\
 [C_f, K_0]\ar[r]^-{\theta_*} \ar@{=}[d] &  [C_f, K_1] \ar[d]_{j^*} & \\
 [C_f, K_0] \ar[r]^-{A} & [Y, K_1]
 }
 \]
 Here the operations $A: [C_f, K_0] \to [Y, K_1]$ and $B: [\Sigma Y, K_1] \to [\Sigma X, K_2]$ are defined by commutativity of the diagram. By exactness of the middle column we can define an operation
 \[
 \Delta=\Delta(\varphi,\theta) : \ker A \to \coker\, B,\qquad \Delta=\varphi_* \circ (\tau^*)^{-1}\circ \theta_*.
 \]
Note that $j^*: [C_f,K_0]\to [Y,K_0]$ maps $\ker A$ to $\ker\theta_Y$, and $(\Sigma f)^*: [\Sigma Y,K_2]\to [\Sigma X,K_2]$ maps $\coker\,\varphi_{\Sigma Y}\cong \coker\,\Omega\varphi_Y$ to $\coker\, B$.
   
\begin{prop}[{\cite[Prop 4.2.2]{Harper}}]\label{prop:compat}
The following diagram commutes:
\[
\xymatrix{
\ker A \ar[r]^-{\Delta} \ar[d]_{j^*} & \coker B \\
\ker\theta_Y \ar[r]_-{\Phi} & \coker\,\Omega\varphi_Y \cong \coker\,\varphi_{\Sigma Y} \ar[u]_-{-(\Sigma f)^*}
}
\]
\end{prop}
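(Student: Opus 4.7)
The proof will be a homotopy-level diagram chase. Given $[\varepsilon] \in \ker A$, set $g := \varepsilon \circ j$, so that $[g] = j^*[\varepsilon] \in \ker \theta_Y$; the goal is to show that $\Delta[\varepsilon] = [\varphi\sigma]$ agrees with $-(\Sigma f)^*\Phi([g])$ in $\coker B$.

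My first step will be to exploit exactness of the middle column of the large diagram, i.e., the Puppe sequence of (\ref{eq:cofib}) mapped into $K_1$. The condition $A[\varepsilon] = j^*[\theta\varepsilon] = 0$ produces $\sigma: \Sigma X \to K_1$ together with an explicit homotopy $h: C_f \times I \to K_1$ from $\sigma\circ\tau$ to $\theta\circ\varepsilon$; by construction $\Delta[\varepsilon] = [\varphi\sigma]$. Since $\tau\circ j$ is the constant map, restricting $h$ along $j\times\mathrm{id}$ and reversing time yields a specific null-homotopy $G$ of $\theta g$ assembled purely from the data $(\sigma, h)$. I would then plug this $G$ together with the fixed null-homotopy $H: \varphi\theta\sim\ast$ into the definition of $\Phi$ to get an explicit piecewise representative $\widetilde\Phi([g]): \Sigma Y \to K_2$, built from $\varphi h$ on one half-cylinder and $H\varepsilon$ on the other. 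Crucially, the same piecewise formula defines a map $K: C_f \times I \to K_2$ which is a null-homotopy of $\varphi\sigma\tau$, and $\widetilde\Phi([g])$ is its restriction along $j\times\mathrm{id}$.

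The comparison on $\Sigma X$ will be the core of the argument. The composition $jf: X \to C_f$ admits a canonical null-homotopy $\eta: X \times I \to C_f$ through the cone $CX \subseteq C_f$, and $\tau\circ\eta$ is precisely the standard quotient map $X \times I \to \Sigma X$. I would form $K \circ (\eta \times \mathrm{id}): X \times I \times I \to K_2$ and read off its four boundary faces with respect to the two $I$-coordinates: they are $(\Sigma f)^*\widetilde\Phi([g])$ on $\Sigma X$ at the ``$\eta$-source'' face; $\varphi\sigma$ on $\Sigma X$ at the ``$K$-source'' face (using $K(\cdot,0) = \varphi\sigma\tau$ together with $\tau\eta =$ quotient); and the constant map $\ast$ on the two remaining faces. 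This square exhibits $(\Sigma f)^*\widetilde\Phi([g])$ and $\varphi\sigma$ as homotopic (up to a sign) as maps $\Sigma X \to K_2$.

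The hard part will be the bookkeeping in this last step: correctly identifying the four boundary faces of the square, tracking the orientation reversal that produces the sign (an artefact of path concatenation in the construction of $\Phi$), and verifying that the various choices — of $\sigma$, of $h$, and of the cone null-homotopy — affect the final answer only by an element of $\operatorname{im} B = \operatorname{im}(\varphi_* \circ (\Sigma f)^*)$, so that the equality descends to $\coker B$. At the prime $2$, which is the only case relevant for the applications in this paper, the sign is vacuous and these last subtleties largely evaporate.
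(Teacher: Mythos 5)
Your outline is essentially correct, but note first that the paper offers no proof of this statement to compare against: it is quoted verbatim from Harper \cite[Prop.~4.2.2]{Harper}, and the citation is the proof. What you have written is a sound reconstruction of the standard argument (in substance, the second Peterson--Stein formula): use exactness to produce $\sigma$ and the homotopy $h$ with $\Delta[\varepsilon]=[\varphi\sigma]$; observe that restricting $h$ along $j$ yields a null-homotopy of $\theta g$ whose associated representative of $\Phi([g])$ is the restriction along $j$ of the null-homotopy $K$ of $\varphi\sigma\tau$ obtained by concatenating $\varphi h$ with $H\varepsilon$; then compare on $\Sigma X$ via the square $K\circ(\eta\times\mathrm{id})$, whose two non-constant faces are adjacent and whose other two faces are constant. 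Harper organises the same data via liftings to the principal fibration classified by $\theta$ rather than via explicit half-cylinders, but the content is identical, and your version is if anything more elementary.

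Two caveats, neither fatal. The sign does not fall out of the square itself --- reading the boundary relation in $\pi_1(K_2)$ gives equality of the two loops on the nose --- but from the orientation conventions identifying the cofibre of $j$ with $\Sigma X$ and $\tau\circ\eta$ with the standard quotient $X\times I\to\Sigma X$; you must fix these conventions explicitly, or (as you note, and as the paper does) work mod $2$ where the issue evaporates. And when you verify that the choices of $\sigma$ and $h$ only move the answer within $\operatorname{im}B$, you will need the standing hypotheses that $K_1$ is an $H$-space and $\varphi$ an $H$-map, which is what makes $\operatorname{im}B$ a subgroup and $\coker B$ meaningful; exactness of the Puppe sequence shows the ambiguity in $\sigma$ lies in $\operatorname{im}(\Sigma f)^*$, whence the ambiguity in $[\varphi\sigma]$ lies in $\operatorname{im}B$ exactly as required.
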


Our particular example of interest is the operation $\Phi:H^i(X)\dashrightarrow H^{i+3}(X)$ based on the Adem relation $Sq^3Sq^1 + Sq^2Sq^2\equiv 0$. Then a cofibration sequence as in (\ref{eq:cofib}) induces a diagram
\[
\xymatrix{
  && H^{i+1}(\Sigma Y)\times H^{i+2}(\Sigma Y) \ar@{=}[r] \ar[d]_{\Sigma f^*} & H^{i+1}(\Sigma Y)\times H^{i+2}(\Sigma Y) \ar[d]_B \\
   && H^{i+1}(\Sigma X)\times H^{i+2}(\Sigma X) \ar[r]^-{\begin{pmatrix}Sq^3 \\Sq^2\end{pmatrix}} \ar[d]_{\tau^*} & H^{i+4}(\Sigma X)\\
 H^i(C_f) \ar[rr]^-{\begin{pmatrix} Sq^1 & Sq^2 \end{pmatrix}} \ar@{=}[d] &&  H^{i+1}(C_f)\times H^{i+2}(C_f) \ar[d]_{j^*} & \\
 H^i(C_f) \ar[rr]^-{A} && H^{i+1}(Y)\times H^{i+2}(Y)
 }
 \]
 and the operation $\Delta: \ker A \to \coker\, B$ fits into a commutative diagram
 \[
\xymatrix{
\ker A \ar[r]^-{\Delta} \ar[d]_{j^*} & \coker B \\
H^i(Y) \ar@{-->}[r]^-{\Phi}&  H^{i+3}(Y)\cong H^{i+4}(\Sigma Y) \ar[u]_-{(\Sigma f)^*}
}
\]
where the sign on the right vertical map has been dropped since we are working mod $2$.

\section{Proof of Theorem \ref{thm:twistor}} 

Our goal is to compute the sectional category of the twistor bundle
\[
\xymatrix{
S^2 \ar[r] & \C P^5 \ar[r]^q & \Ha P^2.
}
\]
The upper bound from the Introduction immediately gives 
\[
\secat(q)< \frac{8+1}{2+1}=3,\quad\mbox{or }\secat(q)\le2.
\]
For any coefficient ring $R$, the Leray--Serre spectral sequence $$H^p(\Ha P^2; H^q(S^2;R))\implies H^*(\C P^5;R)$$ is concentrated in even degrees, hence collapses at the $E_2$-page. In particular, the map $q^*: H^*(\Ha P^2;R)\to H^*(\C P^5;R)$ is injective, which shows that $\wgt(q;R)=0$.

We can see that $q$ does not have a section, so that $\secat(q)>0$, by looking at homotopy groups. The long exact homotopy sequences of the fibrations
\[
\xymatrix{
S^1 \ar[r] & S^{11} \ar[r] &\C P^5,
}
\qquad 
\xymatrix{
S^3 \ar[r] & S^{11} \ar[r] &\Ha P^2,
}
\]
imply that $\pi_4(\C P^5)=0$, while $\pi_4(\Ha P^2)\cong \pi_3(S^3)\cong\Z$. Thus there cannot be a section $s$ of $q$, since the induced map $\pi_4(s)$ would be a split injection from $\Z$ to $0$.

Thus the question is whether $\secat(q)$ is $1$ or $2$, or equivalently, whether the $2$-fold fibred join
\[
\xymatrix{
S^5=S^2\ast S^2 \ar[r] & E(1) \ar[r]^{q(1)} & \Ha P^2
}
\]
admits a section. Note that the Leray--Serre spectral sequence $$H^p(\Ha P^2; H^q(S^5;R))\implies H^*(E(1);R)$$ for $q(1)$ again collapses for degree reasons, implying that $q(1)^*:H^*(\Ha P^2; R)\to H^*(E(1); R)$ is injective.

\begin{lem}\label{lem:Mwgttwistor}
For $q$ the twistor bundle, $\Mwgt(q;\F_2)=1$.
\end{lem}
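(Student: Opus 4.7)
The plan is to prove the two inequalities $\Mwgt(q;\F_2) \ge 1$ and $\Mwgt(q;\F_2) \le 1$. Throughout I write $H^*(\Ha P^2;\F_2) = \F_2[u]/(u^3)$ with $|u|=4$ and $H^*(\C P^5;\F_2) = \F_2[v]/(v^6)$ with $|v|=2$, so $q^*(u)=v^2$. For the lower bound, any $\F_2$-linear retraction $r$ of $q^*$ is forced on generators: $r(v^{2j})=u^j$ by the retraction property, and $r(v^{2j+1})=0$ since $H^{4j+2}(\Ha P^2;\F_2)=0$. Then $r(Sq^2 v) = r(v^2) = u$, whereas $Sq^2 r(v) = Sq^2(0) = 0$, so no such retraction can be $\A_2$-linear. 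Hence $\Mwgt(q;\F_2) \ge 1$.

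For the upper bound I will compute $H^*(E(1);\F_2)$ and produce an explicit $\A_2$-module retraction of $q(1)^*$. The Leray--Serre spectral sequence of $S^5 \to E(1) \xrightarrow{\pi} \Ha P^2$ has $E_2$-page supported in bidegrees $(p,q)$ with $p\in\{0,4,8\}$ and $q\in\{0,5\}$; every potential differential exits this support, so the spectral sequence collapses. Since $H^5(\Ha P^2;\F_2)=0$, there is a unique class $x\in H^5(E(1);\F_2)$ restricting to a generator of $H^5(S^5;\F_2)$, and the Leray--Hirsch theorem yields
\[
H^*(E(1);\F_2) = \pi^* H^*(\Ha P^2;\F_2) \oplus x\cdot \pi^* H^*(\Ha P^2;\F_2)
\]
as a free $H^*(\Ha P^2;\F_2)$-module. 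Let $r$ denote projection onto the first summand; by construction $r$ is a retraction of $\pi^*=q(1)^*$.

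To prove $r$ is $\A_2$-linear, equivalently that $K := \ker r = x\cdot \pi^* H^*(\Ha P^2;\F_2)$ is closed under Steenrod squares, the Cartan formula reduces matters to checking $Sq^i(x) \in K$ for every $i\ge 1$. For $i\in\{1,2,5\}$ the group $H^{5+i}(E(1);\F_2)$ vanishes; for $i=4$ it equals $x\cdot \pi^* H^4(\Ha P^2;\F_2)\subseteq K$; and for $i\ge 6$ instability of the Steenrod squares gives $Sq^i(x)=0$. The only delicate case is $i=3$, since then $H^8(E(1);\F_2) = \F_2\cdot \pi^*(u^2)$ lies entirely in the $\pi^*$-summand and not in $K$. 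This is the main obstacle, and I resolve it via the Adem relation $Sq^3 = Sq^1 Sq^2$ in $\A_2$: since $Sq^2(x) \in H^7(E(1);\F_2) = 0$, we obtain $Sq^3(x) = Sq^1 Sq^2(x) = 0$. Hence $r$ is an $\A_2$-module retraction of $q(1)^*$, yielding $\Mwgt(q;\F_2) \le 1$.
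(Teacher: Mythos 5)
Your proposal is correct and follows essentially the same route as the paper: the identical lower-bound argument via $r(Sq^2 v)=u\neq 0=Sq^2 r(v)$, and for the upper bound the same collapsed spectral sequence, the same complementary-summand retraction, and the same key observation that $Sq^3=Sq^1Sq^2$ kills the only dangerous operation $H^5(E(1))\to H^8(E(1))$. Your Cartan-formula reduction and case-by-case check of $Sq^i(x)$ just spells out a step the paper leaves implicit.
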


\begin{proof}
Let $H^*(-):=H^*(-;\F_2)$. We have
  \[
 H^*(\Ha P^2)\cong \F_2[a]/(a^3),\qquad |a|=4,
 \]
 \[
 H^*(\C P^5) \cong \F_2[b]/(b^6),\qquad |b|=2,
 \]
 and $q^*:H^*(\Ha P^2)\to H^*(\C P^5)$ is given by $q^*(a)=b^2=Sq^2(b)$.
 
 If $q^*$ were to admit a retraction $s^*:H^*(\C P^5)\to H^*(\Ha P^2)$ as $\mathcal{A}_2$-modules, we would have
 \[
 a=s^* q^*(a)=s^*(Sq^2(b))=Sq^2(s^*(b))=Sq^2(0)=0,
 \]
 a contradiction. So $\Mwgt(q;\F_2)\ge 1$. 
 
 On the other hand, collapse of the spectral sequence for $q(1)$ at the $E_2$-page gives
 \begin{equation}\label{eq:cohE1}
 H^*(E(1)) \cong H^*(\Ha P^2)\otimes_{\F_2} H^*(S^5) \cong \begin{cases} \F_2 & \mbox{if }*=0,4,5,8,9,13, \\ 0 & \mbox{else.} \end{cases}
 \end{equation}
The image of $q(1)^*$ is identified with $H^*(\Ha P^2)\otimes H^0(S^5)$. There are no non-trivial Steenrod operations from $H^*(\Ha P^2)\otimes H^5(S^5)$ to $H^*(\Ha P^2)\otimes H^0(S^5)$ (note that $Sq^3=Sq^1Sq^2$). We can therefore define an $\mathcal{A}_2$-module retraction $s^*$ of $q(1)^*$ by setting $s^*(q(1)^*(x))=x$ for $x\in H^*(\Ha P^2)$ and declaring $s^*$ to be zero outside the image of $q(1)^*$.  Hence $\Mwgt(q;\F_2)\le1$, and the lemma is proved.
 \end{proof}
 
 \begin{rem}
 Similar arguments show that $\Mwgt(q;\F_p)=0$ for $p\ge 3$. On the other hand it is clear that $q^*: H^*(\Ha P^2;R)\to H^*(\C P^5;R)$ cannot admit a \emph{multiplicative} retraction, giving another proof that $\secat(q)\ge1$.
 \end{rem}
 
 \begin{lem}\label{lem:Swgttwistor}
 For $q$ the twistor bundle, $\Swgt(q;\F_2)=2$.
 \end{lem}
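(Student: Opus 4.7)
Since $\Swgt(q;\F_2)\le\secat(q)\le 2$, it suffices to prove $\Swgt(q;\F_2)\ge 2$. The plan is to show that the secondary operation $\Phi$ associated with the Adem relation $Sq^3Sq^1+Sq^2Sq^2\equiv 0$ takes the nonzero value $a^2$ on $u\in H^5(E(1);\F_2)$, where $u$ is the class appearing in the proof of \lemref{lem:Mwgttwistor}, and then to derive a contradiction from any retraction commuting with secondary operations.

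Well-definedness of $\Phi(u)$ with zero indeterminacy will be immediate from (\ref{eq:cohE1}): $Sq^1u\in H^6(E(1))=0$ and $Sq^2u\in H^7(E(1))=0$ place $u\in\ker\theta$, while the indeterminacy $\operatorname{im}\varphi\subseteq H^8(E(1))$ is the sum of $Sq^3H^5(E(1))$ and $Sq^2H^6(E(1))$, both zero---the latter trivially, and the former because $Sq^3=Sq^1Sq^2$ (as used already in the proof of \lemref{lem:Mwgttwistor}), so that $Sq^3u=Sq^1(Sq^2u)=0$. Hence $\Phi(u)$ is a well-defined element of $H^8(E(1);\F_2)\cong\F_2$.

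The main step is to show $\Phi(u)\neq 0$. The twistor bundle $q$ is the unit sphere bundle of the rank-$3$ real vector bundle $V=\ad(L)$, where $L$ is the tautological quaternionic line bundle; hence $q(1)$ is the unit sphere bundle of the rank-$6$ bundle $\eta=V\oplus V$. Since $H^i(\Ha P^2;\F_2)=0$ for $i=1,2,3$ and $V$ has rank $3$, we get $w(V)=1$ and hence $w(\eta)=w(V)^2=1$; in particular the Thom class $U\in H^6(T(\eta);\F_2)$ satisfies $Sq^iU=w_i(\eta)U=0$ for every $i\ge 1$. My plan for pinning down $\Phi(u)$ is to reduce to the $8$-skeleton: naturality of $\Phi$ together with the long exact sequence of the pair $(E(1)^{(8)},S^4)$ (yielding isomorphisms on $H^5$ and $H^8$ upon collapsing $E(1)^{(4)}=S^4$) transfers the question to the two-cell complex $E(1)^{(8)}/S^4$. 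Applying \propref{prop:compat} to the Thom cofibration $E(1)\to\Ha P^2\to T(\eta)$ in concert with the skeletal decomposition $\Ha P^2=S^4\cup_\nu e^8$ and the vanishings $Sq^iU=0$, I expect to identify $E(1)^{(8)}/S^4$ with $S^5\cup_{\eta\circ\eta}e^8$, the two-cell complex on which $\Phi$ is classically known to be nonzero (and which represents the Hopf-invariant interpretation foreshadowed in the Introduction). This step---effectively a computation of the relevant $S^5$-component of the attaching map of the $8$-cell in $\pi_7(S^5)=\Z/2$---is the principal technical hurdle.

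Granting $\Phi(u)=a^2$, the lower bound follows by contradiction. Suppose $s^*:H^*(E(1);\F_2)\to H^*(\Ha P^2;\F_2)$ is an $\A_2$-module retraction of $q(1)^*$ commuting with all secondary operations. On the one hand, $s^*(u)\in H^5(\Ha P^2;\F_2)=0$, so $\Phi(s^*(u))=\Phi(0)=0$. On the other hand, $s^*(\Phi(u))=s^*(a^2)=a^2\neq 0$ in $\coker\varphi_{\Ha P^2}$ (the indeterminacy of $\Phi$ there vanishing since $H^5(\Ha P^2;\F_2)=H^6(\Ha P^2;\F_2)=0$), contradicting commutativity. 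Hence $\Swgt(q;\F_2)\ge 2$, and combined with the upper bound, $\Swgt(q;\F_2)=2$.
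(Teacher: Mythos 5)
Your overall strategy --- show that the secondary operation $\Phi$ based on $Sq^3Sq^1+Sq^2Sq^2\equiv 0$ is defined with zero indeterminacy on the generator of $H^5(E(1);\F_2)$ and takes the nonzero value $q(1)^*(a^2)$, then derive a contradiction from any retraction commuting with secondary operations --- is exactly the paper's, and your well-definedness and contradiction steps are correct. But the central claim $\Phi(u)\neq 0$ is not proved: you explicitly defer it as ``the principal technical hurdle'' and only sketch a plan. Moreover, the tool you propose for closing it points the wrong way. Proposition \ref{prop:compat} applied to a cofibration $X\xrightarrow{f}Y\to C_f$ computes secondary operations on $\ker\theta_Y\subseteq H^*(Y)$; applied to the Thom cofibration $E(1)\xrightarrow{q(1)}\Ha P^2\to T(\eta)$ it would compute $\Phi$ in $H^*(\Ha P^2)$ (where it is trivially zero), not in $H^*(E(1))$. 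To compute $\Phi$ on $H^5(E(1))$ you need a cofibre sequence with $E(1)$ in the middle slot. Your guess that $E(1)^{(8)}/S^4\simeq S^5\cup_{\eta\circ\eta}e^8$ is correct (it is recorded as a remark after the paper's proof), but identifying the $S^5$-component of the attaching map as $\eta\circ\eta$ is precisely the content that has to be established, and nothing in your proposal does so.

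The paper closes this gap as follows. From the homotopy pushout defining $E(1)$, the cofibre $C$ of the natural inclusion $f:E=\C P^5\to E(1)$ is the cofibre of the projection $\pi_1:\C P^5\times_{\Ha P^2}\C P^5\to\C P^5$, i.e.\ the Thom space of the pulled-back $S^2$-bundle over $\C P^5$, with Thom class $U$ in degree $3$. Since $w_1(\pi_1)=w_2(\pi_1)=q^*w_i(q)=0$, one gets $Sq^1U=Sq^2U=0$, and the Cartan formula gives $Sq^2(\varphi(b))=\varphi(Sq^2 b)=\varphi(b^2)$, so $Sq^2:H^5(C)\to H^7(C)$ is an isomorphism. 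Applying Proposition \ref{prop:compat} to $E\to E(1)\to C\to\Sigma E$, the operations $A$ and $B$ vanish for degree reasons, $\tau^*:H^7(\Sigma E)\to H^7(C)$ is an isomorphism, and $Sq^2:H^7(\Sigma E)\cong H^6(\C P^5)\to H^9(\Sigma E)\cong H^8(\C P^5)$ is an isomorphism because $Sq^2(b^3)=b^4$; hence $\Delta\neq 0$ and therefore $\Phi\neq 0$ on $H^5(E(1))$. If you want to salvage your skeletal approach instead, you would need to supply the missing computation of the attaching map, which amounts to redoing this Thom-space argument in another guise.
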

 
 \begin{proof}
 Again, let $H^*(-):=H^*(-;\F_2)$. Consider the secondary cohomology operation $\Phi$ based on the Adem relation $Sq^3Sq^1 + Sq^2 Sq^2\equiv 0$. It is defined on the intersection of the kernels of $Sq^1$ and $Sq^2$, and has indeterminacy spanned by the images of $Sq^3$ and $Sq^2$. Let $y\in H^5(E(1))$ be the generator, and note that $Sq^1(y)$ and $Sq^2(y)$ land in zero groups by Equation (\ref{eq:cohE1}). Note also that $Sq^3=Sq^1Sq^2:H^5(E(1))\to H^8(E(1))$ and $Sq^2:H^6(E(1))\to H^8(E(1))$ both factor through zero groups, hence are zero. Thus $\Phi(y)$ is a well-defined element of $H^8(E(1))\cong \F_2$.

We claim that $\Phi(y)$ is nonzero, hence equal to $q(1)^*(a^2)$ where $a^2\in H^8(\Ha P^2)$ is the generator. This implies that $\Swgt(q;\F_2)>1$, as follows. Observe that in $\Ha P^2$ the operation $\Phi: H^5(\Ha P^2)\to H^8(\Ha P^2)$ is zero with zero indeterminacy. Suppose there was a retraction $s^*: H^*(E(1))\to H^*(\Ha P^2)$ of $q(1)^*$ which commutes with all secondary operations. Then we would have
\[
a^2 = s^*q(1)^*(a^2) = s^*\Phi(y) = \Phi(s^*y) = 0,
\]
a contradiction.

To prove the claim we use Proposition \ref{prop:compat} to compute $\Phi(y)$ in terms of primary operations, by placing $E(1)$ in a cofibre sequence. To this end, let us recall that for a fibration $q:E\to B$ the total space $E(1)$ of the fibred join is defined by the homotopy pushout square
\[
\xymatrix{
E\times_B E \ar[r]^-{\pi_2} \ar[d]_{\pi_1} & E \ar[d]^f \\
E \ar[r] & E(1)
}
\]
where the maps $\pi_i:  E\times_B E\to E$ are the projections from the pullback of $q$ by itself onto the two factors. It follows that the homotopy cofibre $C$ of the natural map $f:E\to E(1)$ (which includes $E$ as one of the fibred join factors) is equivalent to the homotopy cofibre of the projection $\pi_1: E\times_B E\to E$. 

In our setting $q:\C P^5\to \Ha P^2$ is a sphere bundle with fibre $S^2$, and so the same is true of its pullback $\pi_1: \C P^5 \times_{\Ha P^2} \C P^5\to \C P^5$. We can therefore think of $C$ as the Thom space of the sphere bundle $\pi_1$, in the following sense. The mapping cylinder $M\pi_1$ is thought of as the disk bundle, and $S:=\C P^5 \times_{\Ha P^2} \C P^5$ (the sphere bundle) includes at one end of the cylinder such that $C=M\pi_1/S$. The Thom isomorphism for sphere bundles \cite{Thom} then gives an isomorphism in cohomology
\[
\varphi: H^i(\C P^5) \to \tilde{H}^{i+3}(C)\cong H^{i+3}(M\pi_1,S),\qquad x\mapsto \pi_1^*(x)\cup U.
\]
 Here $U\in \tilde{H}^3(C)\cong H^3(M\pi_1,S)$ is the Thom class. Thom also defined Stiefel--Whitney classes for sphere bundles using the formula
 \[
 \varphi(w_i)=Sq^i U.
 \]
 These are natural with respect to maps of sphere bundles. In particular, for $i=1,2(,3)$ we have 
 \[
 w_i(\pi_1)=q^*(w_i(q))=0,\qquad\mbox{since }H^i(\Ha P^2)=0.
 \]
 It follows that $Sq^1(U)=0$ and $Sq^2(U)=0$ for the Thom class $U\in \tilde{H}^3(C)$.
 
 Now let $H^*(\C P^5)\cong \F_2[b]/(b^6)$, and consider the element $\varphi(b)\in \tilde{H}^5(C)$. Passing freely between reduced and relative cohomology, by the Cartan formula we have
 \begin{align*}
 Sq^2(\varphi(b)) & = Sq^2(\pi_1^*(b)\cup U) \\
                          & = Sq^2(\pi_1^*(b))\cup U + Sq^1(\pi_1^*(b))\cup Sq^1(U) + \pi_1^*(b)\cup Sq^2(U)\\
                          & = \pi_1^*(Sq^2(b))\cup U\\
                          & = \pi_1^*(b^2)\cup U\\
                          & = \varphi(b^2).
\end{align*}
In particular, $Sq^2:H^5(C)\to H^7(C)$ is an isomorphism.

We now wish to apply \cite[Prop 4.2.2]{Harper} and the cofibre sequence
\[
\xymatrix{
E \ar[r]^f & E(1) \ar[r]^j & C \ar[r]^\tau& \Sigma E \ar[r]^-{\Sigma f} & \Sigma E(1)
}
\]
to compute the operation $\Phi:H^5(E(1)) \to H^8(E(1))$. The relevant diagram (compare \cite[p. 93]{Harper}) is
\[
\xymatrix{
  & H^6(\Sigma E(1))\times H^7(\Sigma E(1)) \ar@{=}[r] \ar[d]_{\Sigma f^*} & H^6(\Sigma E(1))\times H^7(\Sigma E(1)) \ar[d]_B \\
   & H^6(\Sigma E)\times H^7(\Sigma E) \ar[r]^{\begin{pmatrix}Sq^3 \\Sq^2\end{pmatrix}} \ar[d]_{\tau^*} & H^9(\Sigma E)\\
 H^5(C) \ar[r]^-{\begin{pmatrix} Sq^1 & Sq^2 \end{pmatrix}} \ar@{=}[d] &  H^6(C)\times H^7(C) \ar[d]_{j^*} & \\
 H^5(C) \ar[r]^-{A} & H^6(E(1))\times H^7(E(1))
 }
 \]
 where the operations $A$ and $B$ are defined by commutativity of the squares, and the diagram gives a well-defined operation
 \[
 \Delta: \ker A \to \coker B, \qquad \Delta=\begin{pmatrix}Sq^3 \\Sq^2\end{pmatrix}\circ (\tau^*)^{-1} \circ (Sq^1,Sq^2).
 \]
 In our case, the operation $A$ is trivial since $H^6(E(1))=H^7(E(1))=0$, and the operation $B$ is trivial since $H^6(\Sigma E)=H^5(\C P^5)=0$ and $H^7(\Sigma E(1))=H^6(E(1))=0$. Thus we get a well-defined operation $\Delta: H^5(C) \to H^9(\Sigma E)$. Then, working mod $2$, \cite[Prop 4.2.2]{Harper} says that the diagram
 \[
 \xymatrix{
 H^5(C) \ar[r]^{\Delta} \ar[d]_{j^*} & H^9(\Sigma E)\\
 H^5(E(1)) \ar[r]^-{\Phi} & H^8(E(1))\cong H^9(\Sigma E(1)) \ar[u]_{\Sigma f^*}
 }
 \]
 commutes.  Finally, we note that 
 \[
 Sq^2: H^5(C) \to H^7(C),
 \]
 \[
 \tau^*: H^7(\Sigma E)\to H^7(C),
 \]
 \[
 Sq^2: H^7(\Sigma E)\to H^9(\Sigma E)
 \]
 are all isomorphisms. The first follows from the Thom isomorphism as above, the second since $H^7(\Sigma E(1))=H^7(E(1))=0$, and the third since in $H^*(\C P^5)\cong \F_2[b]/(b^6)$ we have 
 \[
 Sq^2(b^3)=3 Sq^2(b)\cup b^2=b^4
 \]
 by the Cartan formula. Hence $\Delta$ is nonzero, which implies that $\Phi$ is nonzero. This proves the claim and the lemma.
 \end{proof}
 
 Theorem \ref{thm:twistor} is immediate on combining Lemmas \ref{lem:Mwgttwistor} and \ref{lem:Swgttwistor}. \qed

\begin{rem}
The primary obstruction to sectioning $q(1)$ is the Euler class in $H^6(\Ha P^2;\pi_5(S^5))=0$. The next obstruction would also live in a zero group $H^7(\Ha P^2;\pi_6(S^5))$. So there exist sections of $q(1)$ on the $7$-skeleton of $\Ha P^2$, but each of these are obstructed from extending to all of $\Ha P^2$ by the non-trivial class in $H^8(\Ha P^2;\pi_7(S^5))=H^8(\Ha P^2;\Z_2)\cong\Z_2$. Incidentally, this class is not in the kernel of $q(1)^*$, which illustrates the non-naturality of higher obstructions.
\end{rem}

\begin{rem}
Our proof is modelled closely on the proof that $\eta\circ \eta\in \pi_{n+2}(S^n)$ is detected by secondary operations in the cofibre $C_{\eta\circ\eta}=S^n\cup_{\eta\circ\eta} e^{n+3}$ given on pp. 95---97 of \cite{Harper}. In fact, our proof shows that $E(1)$ has a cell structure whose $8$-skeleton is $(S^4\vee S^5)\cup_f e^8$, where the map $f$ projects to the Hopf element $\sigma\in \pi_7(S^4)$ on the $S^4$ summand and $\eta\circ\eta\in \pi_7(S^5)$ on the $S^5$ summand.
\end{rem}

\section{Proof of Theorem \ref{thm:2cell}}

Let $\alpha\in \pi_7(S^3)$ be the homotopy class of the composition
\[
\xymatrix{
S^7 \ar[r]^{\eta} & S^6 \ar[r]^{\omega} & S^3,
}
\]
where $\eta$ denotes a suspension of the Hopf map $S^3\to S^2$ and $\omega$ generates $\pi_6(S^3)\cong \Z_{12}$. Define $X:=C_\alpha=S^3\cup_\alpha e^8$ to be the homotopy cofibre of $\alpha$.

 By \cite[Ex 6.11, Cor 6.23, Prop 6.35]{CLOT}, the Berstein-Hilton Hopf invariant $H(\alpha)\in \pi_7(\Omega S^3 \ast \Omega S^3)$ of $\alpha$ is represented by the composition
\[
\xymatrix{
S^7 \ar[r]^{\eta} & S^6 \ar[r]^{\eta} & S^5.
}
\]
Since $\eta\circ\eta\in\pi_7(S^5)$ is nonzero, $\cat(X)=2$. Our objective here is to show that this lower bound can be recovered using secondary cohomology operations, but not using primary operations. In particular, we will show that 
\[
\Swgt(X;\F_2)=2>1=\Mwgt(X;\F_2)=\wgt(X;\F_2),
\]
thus proving Theorem \ref{thm:2cell}. The proof will follow a series of preliminary results. 

Consider the diagram below, whose rows are cofibre sequences:
\begin{equation}\label{cofibdiag}
\xymatrix{
S^7 \ar[r]^\eta \ar@{=}[d] & S^6 \ar[r] \ar[d]^{\omega} & C_{\eta}\ar[d]^\psi \\
S^7 \ar[r]^\alpha & S^3 \ar[r] & C_{\alpha}=X
}
\end{equation}
The map $\psi: C_\eta \to C_\alpha$ is an induced map of cofibres. Applying the functor $G_1\simeq\Sigma\Omega$ gives a cofibre sequence
\begin{equation}\label{cofib}
\xymatrix{
G_1(C_\eta) \ar[r]^{G_1(\psi)} & G_1(C_{\alpha}) \ar[r]^-j & C:=C_{G_1(\psi)} \ar[r]^\tau & \Sigma G_1(C_\eta) \ar[r]^-{\Sigma G_1(\psi)} & \Sigma G_1(C_\alpha).
}
\end{equation}

For the rest of this section $H^*(-):= H^*(-;\F_2)$. We aim to compute secondary operations in $H^*(G_1(C_\alpha))$ by applying compatibility with exact sequences \cite[Prop 4.2.2]{Harper} to this cofibre sequence. First we collect some information about the cohomology of the spaces involved. 

\begin{lem}\label{cohomY}
The (reduced) cohomology groups of $G_1(C_\eta)$ are given for $i\le15$ by 
\[
 H^i(G_1(C_\eta))=\begin{cases} \F_2 & \mbox{if }i=6,8,11,13,15,\ldots \\ 0 & \mbox{otherwise.} \end{cases}
 \]
 Furthermore, $Sq^2:H^6(G_1(C_\eta))\to H^8(G_1(C_\eta))$ is an isomorphism.
 \end{lem}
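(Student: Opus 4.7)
My plan is to identify $C_\eta$ as a suspension and apply James's splitting theorem. Since the stable element $\eta \in \pi_7(S^6)$ is the fourfold suspension of the complex Hopf map $\eta_\C \in \pi_3(S^2)$, one has $C_\eta \simeq \Sigma^4 \C P^2 = \Sigma(\Sigma^3 \C P^2)$. Writing $Y := \Sigma^3 \C P^2$ and using $G_1 \simeq \Sigma\Omega$, James's theorem then yields
\[
G_1(C_\eta) \simeq \Sigma\Omega\Sigma Y \simeq \bigvee_{n \ge 1} \Sigma Y^{\wedge n} \simeq \bigvee_{n \ge 1} \Sigma^{3n+1}(\C P^2)^{\wedge n}.
\]

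The next step is to read off the reduced cohomology summand by summand using K\"unneth. Since $(\C P^2)^{\wedge n}$ is $(2n-1)$-connected with bottom cell in degree $2n$, the $n$-th wedge summand above is $5n$-connected, hence contributes only in degrees $\ge 5n+1$. For $n \ge 3$ this lies outside the range $i \le 15$, reducing the computation to the $n=1$ and $n=2$ summands: $\Sigma^4 \C P^2$ contributes a copy of $\F_2$ in degrees $6$ and $8$, while $\Sigma^7 (\C P^2)^{\wedge 2}$ contributes in degrees $11$, $13$ and $15$. Collating these matches the pattern recorded in the lemma.

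For the assertion about $Sq^2$, I would use naturality under the wedge-summand inclusion $\Sigma^4 \C P^2 \hookrightarrow G_1(C_\eta)$ corresponding to $n=1$, which is the suspension of the loop--suspension adjunction unit $Y \to \Omega\Sigma Y$. In degrees $6$ and $8$ only the $n=1$ summand contributes to $H^*(G_1(C_\eta))$, so this inclusion induces isomorphisms on cohomology there. It then suffices to observe that $Sq^2 \colon H^6(\Sigma^4 \C P^2) \to H^8(\Sigma^4 \C P^2)$ is an isomorphism, which follows from the classical identity $Sq^2(x) = x^2$ in $H^*(\C P^2; \F_2)$ together with stability of the Steenrod squares under suspension.

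The main obstacle is essentially bookkeeping: checking the connectivity bound on higher James summands rigorously, and identifying the $n=1$ wedge-summand inclusion precisely enough with the adjunction unit so that the $Sq^2$ computation in $\C P^2$ transports cleanly to $G_1(C_\eta)$.
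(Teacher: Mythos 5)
Your argument is correct, but it takes a genuinely different route from the paper. The paper observes $C_\eta\simeq\Sigma^4\C P^2$ and then computes $H_*(\Omega C_\eta)$ via the Adams--Hilton model, whose differential vanishes for degree reasons, giving the tensor algebra $\mathbb{T}(a_5,b_7)$; the degrees of the nonzero groups of $G_1(C_\eta)\simeq\Sigma\Omega C_\eta$ are read off from the word-length grading, and the $Sq^2$ statement is deduced from $Sq^2\colon H^6(C_\eta)\to H^8(C_\eta)$ being an isomorphism together with the fact that the cohomology loop-suspension is an isomorphism in this range. Your use of the James splitting $\Sigma\Omega\Sigma Y\simeq\bigvee_{n\ge1}\Sigma Y^{\wedge n}$ with $Y=\Sigma^3\C P^2$ is the space-level refinement of the same Bott--Samelson phenomenon: it buys you a cleaner treatment of the $Sq^2$ claim (pure naturality under the wedge-summand projection plus stability of $Sq^2$ and $Sq^2(x)=x^2$ in $\C P^2$, with no appeal to properties of the cohomology suspension), at the cost of only applying because $C_\eta$ happens to be a suspension --- the paper's Adams--Hilton method is the one that generalises to $C_\alpha$ and $C_\omega$ in the neighbouring lemmas, where the relevant complexes are not suspensions. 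One small caveat: your K\"unneth count gives $H^{13}$ of rank $2$ (from the two degree-$6$ classes of $(\C P^2)^{\wedge 2}$), so it does not literally match the displayed ``$\F_2$'' in degree $13$; but the paper's own tensor-algebra computation has exactly the same feature (two words $ab$, $ba$ of degree $12$), the display is only meant to record in which degrees the groups are nonzero, and only the degrees $\le 10$ and the $Sq^2$ on $H^6$ are used later, so this is an imprecision of the lemma's statement rather than of your proof.
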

 
 \begin{proof}
 Note that $C_\eta\simeq \Sigma^4\C P^2$. The differential in the Adams--Hilton model of $C_\eta$ is zero for degree reasons, and therefore $H_*(\Omega C_\eta)\cong \mathbb{T}(a_5,b_7)$. The homology, and thus also the cohomology, of $\Omega C_\eta$ is therefore concentrated in degrees $5,7,10,12,14,\ldots$ which gives the claim about the cohomology groups of $G_1(C_\eta)\simeq\Sigma\Omega C_\eta$. Note that $Sq^2 :H^6(C_\eta)\to H^8(C_\eta)$ is an isomorphism, and since the cohomology loop suspension is an isomorphism in these dimensions, so are $Sq^2: H^5(\Omega C_\eta)\to H^7(\Omega C_\eta)$ and $Sq^2: H^6(G_1(C_\eta))\to H^8(G_1(C_\eta))$. 
 \end{proof}
 
 \begin{lem}\label{cohomX}
 The (reduced) cohomology groups of $G_1(C_\alpha)$ are given for $i\le 11$ by
 \[
 H^i(G_1(C_\alpha))=\begin{cases} \F_2 & \mbox{if }i=3,5,7,8,9,10,11,\ldots \\ 0 & \mbox{otherwise.} \end{cases}
 \]
 Furthermore, $Sq^2:H^5(G_1(C_\alpha))\to H^7(G_1(C_\alpha))$ and $Sq^3:H^5(G_1(C_\alpha))\to H^8(G_1(C_\alpha))$ are both trivial.
 \end{lem}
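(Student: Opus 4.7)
Since $G_1\simeq\Sigma\Omega$, we have $\tilde H^i(G_1 C_\alpha;\F_2)\cong \tilde H^{i-1}(\Omega C_\alpha;\F_2)$, and Steenrod operations correspond under this suspension isomorphism. The lemma thus reduces to computing $H^j(\Omega C_\alpha;\F_2)$ for $j\le 10$ and showing that $Sq^2$ and $Sq^3$ vanish on the generator of $H^4(\Omega C_\alpha)$. I plan to attack both via the Serre spectral sequence of the path--loop fibration $\Omega C_\alpha\to PC_\alpha\simeq\ast\to C_\alpha$. Since $H^*(C_\alpha;\F_2)=\F_2\{1,u_3,v_8\}$ is concentrated in degrees $0,3,8$, the $E_2$-page is supported in only three columns $p\in\{0,3,8\}$, which makes the differential bookkeeping very manageable.

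Convergence to $H^*(\ast)$ drives the computation inductively. Transgression yields $x_2\in H^2(\Omega C_\alpha)$ with $d_3(x_2)=u_3$; to kill $u_3x_2\in E_3^{3,2}$ (which survives $d_3$ because $u_3^2=0$), we need $x_4\in H^4$ with $d_3(x_4)=u_3x_2$; similarly $x_6\in H^6$ with $d_3(x_6)=u_3x_4$, and $x_7\in H^7$ transgresses to $v_8$. At each step the relevant differential must be injective on its source column, or else the surplus classes would survive forever and contradict convergence; this forces $H^{2k}(\Omega C_\alpha)\cong \F_2$ for $k=1,2,3$ and $H^7(\Omega C_\alpha)\cong\F_2$. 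The vanishing of $H^3$ and $H^5$ of $\Omega C_\alpha$ follows because the only outgoing differentials from those bidegrees land in zero groups. Analogous bookkeeping (tracking products such as $x_2\cdot x_7$ and the new classes killing $u_3\cdot x_6$ and $v_8\cdot x_2$) through total degree $10$ yields the claimed cohomology of $G_1 C_\alpha$.

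For the Steenrod vanishing I apply Kudo's theorem, equivalently the Steenrod-linearity of $d_3$ in the mod $2$ Serre spectral sequence:
\[
d_3(Sq^2 x_4)=Sq^2(d_3 x_4)=Sq^2(u_3x_2)=Sq^2(u_3)\,x_2+Sq^1(u_3)\,Sq^1(x_2)+u_3\,Sq^2(x_2).
\]
All three summands vanish: $Sq^1(u_3),Sq^2(u_3)\in H^{\geq 4}(C_\alpha)=0$; $Sq^1(x_2)\in H^3(\Omega C_\alpha)=0$; and $Sq^2(x_2)=0$ by the same sort of argument, for if $Sq^2x_2$ equalled the generator $x_4\in H^4(\Omega C_\alpha)$, then $d_3(x_4)=u_3x_2\ne 0$ would contradict $d_3(Sq^2x_2)=Sq^2(u_3)=0$. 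Hence $Sq^2 x_4$ is a $d_3$-cycle in $E_3^{0,6}$; but $H^6(\Omega C_\alpha)=\F_2\{x_6\}$ with $d_3(x_6)=u_3x_4\neq 0$ has trivial $d_3$-kernel, so $Sq^2x_4=0$. The Adem relation $Sq^1 Sq^2=Sq^3$ then immediately gives $Sq^3 x_4=Sq^1(Sq^2 x_4)=0$. The main obstacle is the spectral-sequence bookkeeping through total degree $10$, especially the interactions of $x_7$ with the classes $x_{2k}$; once $H^{\le 6}(\Omega C_\alpha)$ is pinned down, the Steenrod vanishing on $x_4$ is a clean Kudo-plus-Adem argument.
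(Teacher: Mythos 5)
Your route to the additive structure (the Serre spectral sequence of $\Omega C_\alpha\to PC_\alpha\to C_\alpha$) is genuinely different from the paper's, which reads off $H_*(\Omega C_\alpha)\cong\mathbb{T}(a_2,b_7)$ from the Adams--Hilton model (using that the adjoint of $\alpha$ factors through $S^5$). Your version is workable, but one step is glossed over: at total degree $7$ the class $u_3x_4\in E_3^{3,4}$ could a priori die by supporting $d_5\colon E_5^{3,4}\to E_5^{8,0}=\F_2\{v_8\}$ rather than by an incoming $d_3$, which would give the self-consistent (but wrong) scenario $H^6(\Omega C_\alpha)=H^7(\Omega C_\alpha)=0$. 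Injectivity of $d_3$ on the source column only bounds $H^6$ above; to get it from below you need the Leibniz rule: $d_3(x_2x_4)=u_3x_4+u_3x_2^{\,2}=u_3x_4\neq 0$ (using $x_2^2\in\ker d_3|_{E^{0,6-2}}=0$), which forces $x_2x_4\neq0$ and hence $H^6(\Omega C_\alpha)=\F_2$ and $E_4^{3,4}=0$, after which the transgression argument for $x_7$ goes through. (Also, your bookkeeping in degrees $9$--$10$ of $\Omega C_\alpha$ would actually produce $\F_2^2$ in degree $9$, dual to $ab$ and $ba$ in the tensor algebra; the lemma's list is only reliable through the degrees the paper actually uses.)

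The genuine gap is in the Steenrod vanishing. Kudo's transgression theorem applies only to transgressive classes, and $x_4$ is not transgressive: it supports $d_3(x_4)=u_3x_2\in E_3^{3,2}$, which is not in the bottom row. The identity $d_3(Sq^2x_4)=Sq^2(d_3x_4)$, with the right-hand side expanded by the Cartan formula into terms lying in three different bidegrees $(5,2)$, $(4,3)$, $(3,4)$, is not an instance of any standard theorem; making ``Steenrod operations commute with Serre differentials'' precise for non-transgressive classes requires Singer's theory of squares in spectral sequences, where the operation $Sq^t$ on a class of fiber degree $t$ (here $Sq^2$ on $E^{3,2}$) shifts filtration and the compatible differential changes page, so the clean formula you write is not available off the shelf. (Your use of genuine Kudo to show $Sq^2x_2=0$ is fine, since $x_2$ is transgressive.) The step can be repaired without this machinery: $S^3\hookrightarrow C_\alpha$ is a $7$-equivalence, so $\Omega S^3\to\Omega C_\alpha$ induces an isomorphism on $H^4$ and an injection on $H^6$, and $Sq^2\colon H^4(\Omega S^3;\F_2)\to H^6(\Omega S^3;\F_2)$ vanishes (dually, $Sq^2_*(a^3)=0$ in $\F_2[a_2]$ by the Cartan formula for Pontryagin products); naturality then gives $Sq^2x_4=0$. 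This is essentially the paper's argument, run directly in $H_*(\Omega C_\alpha)\cong\mathbb{T}(a_2,b_7)$. Your observation that $Sq^3=Sq^1Sq^2$ then kills the $Sq^3$ claim is correct and is in fact simpler than the paper's separate comparison with $G_1(C_\eta)$.
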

 
 \begin{proof}
 The differential in the Adams--Hilton model of $C_\alpha$ is zero since the adjoint of $\alpha$ factors as
 \[
 \xymatrix{
 S^6 \ar[r] & S^5 \ar[r] & \Omega S^3.
 }
 \]
 Therefore $H_*(\Omega C_\alpha)\cong \mathbb{T}(a_2,b_7)$. The homology, and thus also the cohomology, of $\Omega C_\alpha$ is therefore concentrated in degrees $2,4,6,7,8,9,10,\ldots$ which gives the claim about the cohomology groups of $G_1(C_\alpha)\simeq\Sigma\Omega C_\alpha$. To verify the claim about $Sq^2$, it suffices to show that $Sq^2:H^4(\Omega C_\alpha)\to H^6(\Omega C_\alpha)$ is zero, or equivalently its linear dual $Sq^2_*:H_6(\Omega C_\alpha)\to H_4(\Omega C_\alpha)$ is zero. This follows from the description of the Pontryagin algebra $H_*(\Omega C_\alpha)$ given above together with the Cartan formula for Pontryagin products, which gives $Sq^2_*(a^3)=0$. 
 
 To prove the claim about $Sq^3$, inspect the diagram
 \[
 \xymatrix{
 H^5(G_1(C_\alpha)) \ar[r]^-{G_1(\psi)^*} \ar[d]_{Sq^3} & H^5(G_1(C_\eta)) \ar[d]^{Sq^3=0} \\
 H^8(G_1(C_\alpha)) \ar[r]^-{G_1(\psi)^*} & H^8(G_1(C_\eta))
 }
 \]
 and note that the bottom horizontal arrow is an isomorphism as follows from applying the cohomology suspension to the isomorphism $\psi^*: H^8(C_\alpha)\to H^8(C_\eta)$. 
  \end{proof}
 
  The following must be well known, but we've not yet found a reference. Recall that a map is called an \emph{$m$-equivalence} if it induces an isomorphism on homotopy groups in dimensions less than $m$, and an epimorphism in dimension $m$.
 
 \begin{lem}\label{Ganea}
 Let $f:A\to X$ be a map with cofiber $C_f$. Suppose that $A$ is $(n-1)$-connected and $f$ is an $m$-equivalence, and that $m,n\ge1$. Then the natural map $C_{\Omega f}\to \Omega C_f$ is an $(m+n-1)$-equivalence. Hence the natural map $C_{G_1(f)}\to G_1(C_f)$ is an $(m+n)$-equivalence.
 \end{lem}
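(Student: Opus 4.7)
The plan is to prove the first statement; the second follows on suspending, since $\Sigma C_{\Omega f}\simeq C_{G_1(f)}$ and $\Sigma\Omega C_f=G_1(C_f)$, and suspending an $(m+n-1)$-equivalence between simply-connected spaces gives an $(m+n)$-equivalence (the cofiber of the suspension is the suspension of the cofiber, gaining one in connectivity).

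The natural map $\iota\colon C_{\Omega f}\to\Omega C_f$ comes from the universal property of the homotopy pushout $C_{\Omega f}$ applied to the commutative square
\[
\xymatrix{
\Omega A\ar[r]^-{\Omega f}\ar[d] & \Omega X\ar[d]\\
\ast\ar[r] & \Omega C_f,
}
\]
which commutes because the composite $A\to X\to C_f$ is null. To bound the connectivity of $\iota$, I would invoke the Blakers--Massey excision theorem twice. First, apply it to the homotopy pushout defining $C_f$:
\[
\xymatrix{
A\ar[r]^-f\ar[d] & X\ar[d]\\
\ast\ar[r] & C_f.
}
\]
Here $f$ is $m$-connected and $A\to\ast$ is $n$-connected (since $A$ is $(n-1)$-connected), so Blakers--Massey yields the classical Ganea fiber-cofiber comparison: the natural map $A\to F:=\mathrm{hofib}(X\to C_f)$ is $(m+n-1)$-connected. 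Looping gives an $(m+n-2)$-equivalence $\Omega A\to\Omega F=\mathrm{hofib}(\Omega X\to\Omega C_f)$.

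Second, factor $\iota$ via the octahedral axiom for the composable pair $\Omega A\to\Omega F\to\Omega X$, which provides a cofiber sequence
\[
\Omega F/\Omega A\longrightarrow C_{\Omega f}\longrightarrow C_{\Omega F\to\Omega X}
\]
whose composite with a further map to $\Omega C_f$ recovers $\iota$. The first term is $(m+n-2)$-connected (as the cofiber of an $(m+n-2)$-equivalence), so the map $C_{\Omega f}\to C_{\Omega F\to\Omega X}$ has $(m+n-1)$-connected cofiber, hence is itself $(m+n-1)$-connected. The remaining map $C_{\Omega F\to\Omega X}\to\Omega C_f$ is the Ganea fiber-cofiber comparison for the fibration $\Omega X\to\Omega C_f$, whose homotopy fiber is the join $\Omega F\ast\Omega^2 C_f$; with $\Omega F$ being $(n-2)$-connected and $\Omega^2 C_f$ being $(m-2)$-connected (as $f$ being $m$-connected makes the quotient $C_f$ $m$-connected), this join is $(m+n-2)$-connected, so the map is $(m+n-1)$-connected. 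Composing yields the desired bound on $\iota$.

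The principal obstacle is the careful bookkeeping of connectivities, particularly in verifying that the join $\Omega F\ast\Omega^2 C_f$ appearing in the second Ganea step is sufficiently highly connected to not obstruct the final estimate; under the hypothesis $m,n\ge1$ all spaces involved are connected enough (and simply-connected once $m,n\ge2$) for these numerics to combine into the claimed $(m+n-1)$-equivalence.
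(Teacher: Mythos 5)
Your argument is correct and arrives at the same bound, but via a genuinely different factorization of the comparison map. The paper factors $C_{\Omega f}\to \Omega C_f$ through the homotopy fibre $F_f$ of $f$ itself, as $C_{\Omega f}\xrightarrow{\ \xi\ }F_f\xrightarrow{\ \varphi\ }\Omega C_f$: the homotopy fibre of $\xi$ is $\Omega A\ast\Omega F_f$ by Ganea's join theorem applied to the fibration $\Omega X\to F_f$ with fibre $\Omega A$, and $\varphi$ is an $(m+n-1)$-equivalence by Ganea's 1968 fibre-versus-cofibre theorem. You instead factor through the cofibre $\Omega X/\Omega F_\rho$, where $F_\rho$ is the homotopy fibre of $X\to C_f$: the first map is controlled by Blakers--Massey applied to the defining pushout together with the octahedral axiom, and the second is the same Ganea join theorem applied one stage later, to the fibration $\Omega X\to\Omega C_f$. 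The two routes consume essentially the same inputs --- one application of the join formula plus one fibre/cofibre comparison (Ganea's 1968 theorem in the paper, Blakers--Massey in yours; these are two faces of the same triad-connectivity statement) --- and your connectivity bookkeeping ($\Omega F_\rho$ being $(n-2)$-connected, $\Omega^2C_f$ being $(m-2)$-connected, joins gaining $2$) checks out. Two points you should make explicit: that the composite $C_{\Omega f}\to\Omega X/\Omega F_\rho\to\Omega C_f$ really is the natural map $\iota$ (true, by compatibility of the canonical null-homotopies of $\Omega A\to\Omega X\to\Omega C_f$, but asserted rather than verified); and the $\pi_1$-control needed for the homology-Whitehead steps, which you flag only at the end. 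The latter caveat is shared by the paper's proof and is harmless in the application, where $m=2$ and $n=6$.
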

 
 \begin{proof}
 Let $\rho:X\to C_f$ be the projection to the cofiber. Then we may construct a commuting diagram whose rows are fibration sequences:
 \[
 \xymatrix{
 \Omega F_\rho \ar[r] & \Omega X \ar[r] & \Omega C_f \ar[r] & F_\rho \ar[r] & X \ar[r]^-{\rho} & C_f \\
 \Omega A \ar[r]^-{\Omega f} \ar[u] & \Omega X \ar@{=}[u] \ar[r] \ar[dr] & F_f \ar[r] \ar[u]^{\varphi} & A \ar[r]^{f} \ar[u] & X \ar@{=}[u] & \\
  & & C_{\Omega f} \ar[u]^{\xi} & & & 
  }
  \]
According to Ganea \cite{Ganea65}, the homotopy fibre of the natural map $\xi: C_{\Omega f}\to F_f$ is $\Omega A \ast \Omega F_f$, which is $(m+n-2)$-connected. Hence $\xi$ is an $(m+n-1)$-equivalence. Also according to Ganea \cite[Theorem 3.1]{Ganea68}, the natural map $\varphi: F_f \to \Omega C_f$ is an $(m+n-1)$-equivalence. Then $\varphi\circ\xi$ is an $(m+n-1)$-equivalence. The claim follows.
 \end{proof}

 \begin{lem}\label{cohomC}
 The cohomology groups of $C:=C_{G_1(\psi)}$ satisfy $H^5(C)\cong \F_2$ and $H^7(C)\cong \F_2\oplus\F_2$. Furthermore, $Sq^2:H^5(C)\to H^7(C)$ is non-trivial.
 \end{lem}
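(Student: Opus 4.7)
The plan is to use the cohomology long exact sequence of the cofibre sequence (\ref{cofib}) to compute $H^5(C)$ and $H^7(C)$, to invoke Lemma \ref{Ganea} to identify $Sq^2$ in $H^*(C)$ with $Sq^2$ in $H^*(\Omega C_\omega;\F_2)$, and then to read off the required non-vanishing from Kono--Kozima's description of $H^*(\Omega\operatorname{Spin}(5);\F_2)$.

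First I would compute $H^5(C)$ and $H^7(C)$ from the long exact sequence of (\ref{cofib}) together with Lemmas \ref{cohomY} and \ref{cohomX}. Since $H^4(G_1(C_\eta))=H^5(G_1(C_\eta))=0$, the map $j^*\colon H^5(C)\to H^5(G_1(C_\alpha))$ is an isomorphism, giving $H^5(C)\cong\F_2$. The short exact segment $0\to H^6(G_1(C_\eta))\to H^7(C)\to H^7(G_1(C_\alpha))\to 0$ gives $H^7(C)\cong\F_2\oplus\F_2$. Let $x\in H^5(C)$ be the generator; naturality of $Sq^2$ together with the vanishing in Lemma \ref{cohomX} forces $j^*(Sq^2 x)=Sq^2(j^* x)=0$, so $Sq^2 x$ lies in the one-dimensional subspace $\ker j^*\subset H^7(C)$, identified with $H^6(G_1(C_\eta))$ via the connecting homomorphism.

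The crux is to show $Sq^2 x\neq 0$. A $3\times 3$ (octahedral) argument applied to (\ref{cofibdiag}) identifies the cofibre $C_\psi$ of $\psi\colon C_\eta\to C_\alpha$ with $C_\omega=S^3\cup_\omega e^7$. Since $C_\eta$ is $5$-connected (so $n=6$) and $\psi$ is a $2$-equivalence (because $C_\omega$ is $2$-connected, so $m=2$), Lemma \ref{Ganea} produces an $8$-equivalence $C=C_{G_1(\psi)}\to G_1(C_\omega)$. Through the cohomology suspension, the map $Sq^2\colon H^5(C)\to H^7(C)$ is thereby identified with $Sq^2\colon H^4(\Omega C_\omega;\F_2)\to H^6(\Omega C_\omega;\F_2)$.

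Finally, because $C_\omega$ is (at the prime $2$) the $7$-skeleton of $\operatorname{Spin}(5)=Sp(2)$, the inclusion $\Omega C_\omega\hookrightarrow\Omega\operatorname{Spin}(5)$ is a sufficiently high equivalence for the $\mathcal{A}_2$-actions to agree in the relevant degrees. The Kono--Kozima computation of $H^*(\Omega\operatorname{Spin}(5);\F_2)$ as an $\mathcal{A}_2$-module shows that $Sq^2$ carries the degree-$4$ generator (which comes from $\Omega S^3\hookrightarrow\Omega\operatorname{Spin}(5)$) to the new degree-$6$ class, namely the transgressive generator dual to $x_7\in H^7(\operatorname{Spin}(5);\F_2)$. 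Hence $Sq^2 x\neq 0$ in $H^7(C)$. The main obstacle is this last step: extracting this explicit non-vanishing from Kono--Kozima's formulas and matching it with the generator of $\ker j^*$.
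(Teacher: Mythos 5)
Your proposal is correct and follows essentially the same route as the paper: the long exact sequence of (\ref{cofib}) for the group computations, the $3\times 3$ diagram to identify $C_\psi\simeq C_\omega$, Lemma \ref{Ganea} with $m=2$, $n=6$ to get the $8$-equivalence $C\to G_1(C_\omega)$, and Kono--Kozima for the non-vanishing of $Sq^2$ on $H^4(\Omega\operatorname{Spin}(5))$. The only step you flag as an obstacle is handled in the paper by passing through the $3$-connected cover: Kono--Kozima give $Sq^2\neq 0$ on $H^5(\operatorname{Spin}(5)_{\langle 3\rangle})$, the cohomology suspension transfers this to $H^4(\Omega\operatorname{Spin}(5)_{\langle 3\rangle})$, and comparison with $\Omega\operatorname{Spin}(5)$ in even degrees finishes the argument.
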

 
 \begin{proof}
 The cofiber of the map $\psi:C_\eta\to C_\alpha$ is $C_{\omega}$, as illustrated by the following extension of diagram (\ref{cofibdiag}) whose rows and columns are cofibre sequences:
 \[
\xymatrix{
S^7 \ar[r]^\eta \ar@{=}[d] & S^6 \ar[r] \ar[d]^{\omega} & C_{\eta} \ar[d]^\psi \\
S^7 \ar[r]^\alpha \ar[d]& S^3 \ar[r] \ar[d] & C_{\alpha} \ar[d] \\
\ast \ar[r] & C_{\omega} \ar@{=}[r] & C_{\omega}
}
\]
 Now $C_\eta$ is $(6-1)$-connected and $\psi$ is a $2$-equivalence, so Lemma \ref{Ganea} gives an $8$-equivalence $C\to G_1(C_{\omega})$. One can now verify the claim about the cohomology groups of $C$, either by examining the Adams--Hilton model for $C_\omega$ as in Lemmas \ref{cohomX} and \ref{cohomY}, or by examining the relevant portions of the long exact cohomology sequence of (\ref{cofib}):
 \[
 \xymatrix{
 0 \ar[r] & H^5(C) \ar[r] & H^5(G_1(C_\alpha)) \ar[r] & 0,
 }
 \]
 \[
 \xymatrix{
 0\ar[r] & H^6(G_1(C_\eta)) \ar[r] & H^7(C) \ar[r] & H^7(G_1(C_\alpha)) \ar[r] & 0
 }
 \]
 
 To prove the claim about $Sq^2$ it suffices to show that $Sq^2:H^5(G_1(C_{\omega}))\to H^7(G_1(C_{\omega}))$ is non-trivial, or equivalently that $Sq^2: H^4(\Omega C_{\omega})\to H^6(\Omega C_{\omega})$ is non-trivial. We next observe that $C_\omega=S^3\cup_\omega e^7$ is the $9$-skeleton of the Lie group $Sp(2)\cong Spin(5)\simeq (S^3\cup_{\omega} e^7)\cup e^{10}$, giving an $8$-equivalence $\Omega C_\omega\to \Omega Spin(5)$. We are therefore reduced to showing that $H^4(\Omega Spin(5))\to H^6(\Omega Spin(5))$ is non-trivial. This follows from a result of Kono--Kozima \cite[Prop 3.4]{KonoKozima}. 
 
 [In slightly more detail: letting $Spin(5)_{\langle 3\rangle}$ denote the $3$-connected cover of $Spin(5)$, Kono--Kozima show that $Sq^2: H^5(Spin(5)_{\langle 3\rangle})\to H^7(Spin(5)_{\langle 3\rangle})$ is non-trivial. Since the cohomology suspension $\sigma: H^7(Spin(5)_{\langle 3\rangle})\to H^6(\Omega Spin(5)_{\langle 3\rangle})$ is injective, it follows that $Sq^2: H^4(\Omega Spin(5)_{\langle 3\rangle})\to H^6(\Omega Spin(5)_{\langle 3\rangle})$ is non-trivial. Now since $H^*(\Omega Spin(5)_{\langle 3\rangle})\to H^*(\Omega Spin(5))$ is epimorphic in even degrees, it follows that $Sq^2: H^4(\Omega Spin(5))\to H^6(\Omega Spin(5))$ is non-trivial.]
 \end{proof}
 
 \begin{rem}
 Lucile Vandembroucq has pointed out the following alternative proof that $Sq^2:H^5(G_1(C_{\omega}))\to H^7(G_1(C_{\omega}))$ is non-trivial, based on results of Gilbert \cite{Gilbert}. Consider the cofiber sequence
 \[
 \xymatrix{
 \Sigma S^5 = S^6 \ar[r]^-{\omega} & S^3 \ar[r] & C_\omega.
 }
 \]
 Let $\omega^\perp: S^5\to \Omega S^3$ be the adjoint of $\omega$, and $\beta:=\Sigma\omega^\perp: S^6\to \Sigma\Omega S^3$ its suspension. According to \cite[Proposition 4.3]{Gilbert}, there is an $8$-equivalence $\Sigma\Omega S^3 \cup_\beta e^7 \to G_1(C_\omega)$. For dimensional reasons, after the identification $\Sigma \Omega S^3\simeq S^3 \vee S^5 \vee S^7\vee\cdots$ the map $\beta$ is completely determined by its projections onto the bottom two spheres $S^3$ and $S^5$, which can be indentified with $\omega$ and $H_2(\omega)=\eta$ \cite{CLOT, Gilbert}. The cofibre of the map $S^3\to \Sigma\Omega S^3 \cup_\beta e^7$ which includes the bottom cell therefore has the homotopy type of $S^5\cup_\eta e^7$ wedge spheres, and the map to this cofibre induces isomorphisms on cohomology in degrees $5$ and $7$.
 \end{rem}
 
 \begin{prop}
 The secondary operation $\Phi:H^5(G_1(C_\alpha))\to H^8(G_1(C_\alpha))$ based on $Sq^3Sq^1 + Sq^2 Sq^2\equiv 0$ is non-trivial.
 \end{prop}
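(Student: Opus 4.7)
My plan is to apply Proposition~\ref{prop:compat} to the cofibre sequence (\ref{cofib}) with $\theta=(Sq^1,Sq^2)$ and $\varphi=(Sq^3,Sq^2)$, the primary operations witnessing the Adem relation $Sq^3Sq^1+Sq^2Sq^2\equiv 0$, in order to reduce the computation of $\Phi$ to primary Steenrod squares in $C$ and $\Sigma G_1(C_\eta)$. Write $b\in H^5(G_1(C_\alpha))$ and $y\in H^6(G_1(C_\eta))$ for the nonzero classes furnished by Lemmas~\ref{cohomX} and \ref{cohomY}. Lemma~\ref{cohomX} ensures $b\in\ker\theta$ and that the indeterminacy $Sq^3(H^5(G_1(C_\alpha)))+Sq^2(H^6(G_1(C_\alpha)))$ vanishes, so $\Phi(b)$ is a well-defined element of $H^8(G_1(C_\alpha))\cong\F_2$.

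The first step is to verify that the auxiliary operations $A$ and $B$ in Proposition~\ref{prop:compat} both vanish, for purely dimensional reasons. For $A=j^*\circ\theta$: its target $H^6(G_1(C_\alpha))\oplus H^7(G_1(C_\alpha))$ has first summand zero, and its $H^7$-component equals $Sq^2(b)=0$ by Lemma~\ref{cohomX}. For $B=\varphi_*\circ \Sigma G_1(\psi)^*$: the source $H^6(\Sigma G_1(C_\alpha))\oplus H^7(\Sigma G_1(C_\alpha))\cong H^5(G_1(C_\alpha))\oplus H^6(G_1(C_\alpha))$ has second summand zero, and on the first summand $B$ factors through $G_1(\psi)^*\colon H^5(G_1(C_\alpha))\to H^5(G_1(C_\eta))=0$ (Lemma~\ref{cohomY}). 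Hence $\ker A=H^5(C)$ and $\coker B\cong H^9(\Sigma G_1(C_\eta))\cong H^8(G_1(C_\eta))$, so $\Delta\colon H^5(C)\to H^8(G_1(C_\eta))$ is defined on all of $H^5(C)$. Moreover, the long exact cohomology sequence of (\ref{cofib}), together with $H^5(G_1(C_\eta))=0$, shows that $j^*\colon H^5(C)\to H^5(G_1(C_\alpha))$ is an isomorphism; write $c\in H^5(C)$ for the unique class with $j^*(c)=b$.

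Next I will compute $\Delta(c)$. The same long exact sequence gives $H^6(C)=0$, so $Sq^1(c)=0$; and $Sq^2(c)\in H^7(C)$ is nonzero by Lemma~\ref{cohomC}. Since $j^*(Sq^2(c))=Sq^2(b)=0$, exactness produces a unique preimage $Sq^2(c)=\tau^*(\overline{y})$, where $\overline{y}\in H^7(\Sigma G_1(C_\eta))\cong H^6(G_1(C_\eta))$ corresponds to $y$. Thus $\Delta(c)=Sq^2(y)$, which is nonzero by the $Sq^2$ isomorphism in Lemma~\ref{cohomY}. The commuting square of Proposition~\ref{prop:compat} then reads $\Sigma G_1(\psi)^*(\Phi(b))=\pm\Delta(c)\neq 0$ in $H^8(G_1(C_\eta))$, and since $G_1(\psi)^*\colon H^8(G_1(C_\alpha))\to H^8(G_1(C_\eta))$ is an isomorphism (as established inside the proof of Lemma~\ref{cohomX}), we conclude $\Phi(b)\neq 0$. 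The main obstacle is the identification $(\tau^*)^{-1}(Sq^2(c))=\overline{y}$, which requires combining the nontriviality of $Sq^2$ on $H^5(C)$ (Lemma~\ref{cohomC}) with its vanishing on $H^5(G_1(C_\alpha))$ (Lemma~\ref{cohomX}) via exactness of (\ref{cofib}).
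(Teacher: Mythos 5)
Your proof is correct and follows essentially the same route as the paper: both apply Harper's compatibility with exact sequences (Proposition~\ref{prop:compat}) to the cofibre sequence (\ref{cofib}), check that $A$ and $B$ vanish for the same dimensional reasons, and detect $\Delta$ via the nonzero $Sq^2$ on $H^5(C)$ from Lemma~\ref{cohomC} composed with the $Sq^2$ isomorphism on $H^6(G_1(C_\eta))$ from Lemma~\ref{cohomY}. Your write-up is slightly more explicit than the paper's about the indeterminacy of $\Phi$ and the identification of $(\tau^*)^{-1}(Sq^2(c))$ with the generator $\overline{y}$, but these are elaborations of the same argument rather than a different one.
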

 
 \begin{proof}
 We apply Proposition \ref{prop:compat} to the cofiber sequence (\ref{cofib}), the relevant diagram being
\[
\xymatrix{
  & H^6(\Sigma G_1(C_\alpha))\times H^7(\Sigma G_1(C_\alpha)) \ar@{=}[r] \ar[d]_{\Sigma G_1(\psi)^*} & H^6(\Sigma G_1(C_\alpha))\times H^7(\Sigma G_1(C_\alpha)) \ar[d]_B \\
   & H^6(\Sigma G_1(C_\eta))\times H^7(\Sigma G_1(C_\eta)) \ar[r]^-{\begin{pmatrix}Sq^3 \\Sq^2\end{pmatrix}} \ar[d]_{\tau^*} & H^9(\Sigma G_1(C_\eta))\\
 H^5(C) \ar[r]^-{\begin{pmatrix} Sq^1 & Sq^2 \end{pmatrix}} \ar@{=}[d] &  H^6(C)\times H^7(C) \ar[d]_{j^*} & \\
 H^5(C) \ar[r]^-{A} & H^6(G_1(C_\alpha))\times H^7(G_1(C_\alpha))
 }
 \]
 where the operations $A$ and $B$ are defined by commutativity of the squares, and the diagram gives a well-defined operation
 \[
 \Delta: \ker A \to \coker B, \qquad \Delta=\begin{pmatrix}Sq^3 \\Sq^2\end{pmatrix}\circ (\tau^*)^{-1} \circ (Sq^1,Sq^2).
 \]
 The operation $A$ is trivial, since $H^6(G_1(C_\alpha))=0$ and $j^*Sq^2=Sq^2 j^*:H^5(C)\to H^7(G_1(C_\alpha))$ is trivial by Lemma \ref{cohomX}. The operation $B$ is trivial since $H^6(\Sigma G_1(C_\eta))$ and $H^7(\Sigma G_1(C_\alpha))$ are both trivial by Lemmas \ref{cohomY} and \ref{cohomX}. Thus we get a well-defined operation $\Delta: H^5(C) \to H^9(\Sigma G_1(C_\eta))$. Then Proposition \ref{prop:compat} says that the diagram
 \[
 \xymatrix{
 H^5(C) \ar[r]^{\Delta} \ar[d]_{j^*} & H^9(\Sigma G_1(C_\eta))\\
 H^5(G_1(C_\alpha)) \ar[r]^-{\Phi} & H^8(G_1(C_\alpha))\cong H^9(\Sigma G_1(C_\alpha)) \ar[u]_{\Sigma G_1(\psi)^*}
 }
 \]
 commutes.
 
 Finally we note that $\Delta$ is non-trivial: If $c\in H^5(C)$ is the generator then $Sq^2(c)\in H^7(C)$ is a nonzero element of $\ker j^*=\mathrm{im}\, \tau^*$ by Lemma \ref{cohomC}, and $Sq^2: H^7(\Sigma G_1(C_\eta))\to H^9(\Sigma G_1(C_\eta))$ is an isomorphism by Lemma \ref{cohomY}. 
 \end{proof} 
 
% \begin{cor}
 %For the composition $\alpha=\omega\circ\eta$ the space $X=C_\alpha$ has
% \[
%\operatorname{wgt}(X;\Z_2)=\operatorname{Mwgt}(X;\Z_2)=1<2=\operatorname{Swgt}(X;\Z_2)=\cat(X).
%\]
%\end{cor}

\begin{proof}[Proof of Theorem \ref{thm:2cell}]
Examining the Adams--Hilton model for $\Omega X$ as in the proof of Lemma \ref{cohomX} shows that $H^*(X)\to H^*(G_1(X))$ is injective, therefore $\operatorname{wgt}(X;\F_2)=1$. To see that $\operatorname{Mwgt}(X;\F_2)\leq 1$ we must produce a retraction $H^*(G_1(X))\to H^*(X)$ as $\mathcal{A}_2$-modules. This is easy enough, since all Steenrod squares in $H^*(X)$ are trivial and there are no non-trivial Steenrod operations in $H^*(G_1(X))$ taking values in the image of $H^*(X)$. 

Finally, to see that $\operatorname{Swgt}(X;\F_2)>1$ we simply observe that there cannot exist a retraction $H^*(G_1(X))\to H^*(X)$ commuting with the secondary operation $\Phi$, since $H^5(X)=0$. Since the two-cell complex $X$ must have $\cat(X)\le 2$, this completes the proof. 
\end{proof}

\begin{rem}
Similar arguments in $\F_p$-cohomology yield that $\Mwgt(X;\F_p)=1$.
\end{rem}

\begin{rem}
Note that higher Hopf invariants were not used in the above proof, other than to discover the example. We regard the above example as a ``proof-of-concept", suggesting the secondary module weight may be a useful estimate of category in more complicated examples.
\end{rem}

\end{document}